\newtheorem{theorem}{Theorem}[section]
\newtheorem{prop}{Proposition}
\newtheorem{lemma}{Lemma}[section]
\newtheorem{cor}{Corollary}[section]
\newcommand{\Z}{{\mathbb Z}}
\newcommand{\Q}{{\mathbb Q}}
\newcommand{\od}{{\ell_{o}}}
\newcommand{\ev}{{\ell_{e}}}
\begin{document}

\title[On the smallest number of terms of vanishing sums of units]{On the smallest number of terms of vanishing sums of units in number fields}

\subjclass[2010]{11R27, 11D85, 11D72}
\keywords{Exceptional units, unit equations, arithmetic graphs}

\author[Cs. Bert\'ok, K. Gy\H{o}ry, L. Hajdu, A. Schinzel]{Cs. Bert\'ok, K. Gy\H{o}ry, L. Hajdu, A. Schinzel}

\address{Cs. Bert\'ok\newline
\indent Faculty of Informatics\newline
\indent and the MTA-DE Research Group "Equations, Functions and Curves"\newline
\indent of the Hungarian Academy of Sciences and the University of Debrecen\newline
\indent University of Debrecen\newline
\indent H-4002 Debrecen, P.O. Box 400\newline
\indent Hungary}

\email{bertok.csanad@inf.unideb.hu}

\address{K. Gy\H{o}ry, L. Hajdu\newline
\indent Institute of Mathematics\newline
\indent University of Debrecen\newline
\indent H-4002 Debrecen, P.O. Box 400\newline
\indent Hungary}

\email{gyory@science.unideb.hu}
\email{hajdul@science.unideb.hu}

\address{A. Schinzel\newline
\indent Institute of Mathematics\newline
\indent Polish Academy of Sciences\newline
\indent ul. Sniadeckich 8, 00-656, Warszawa\newline
\indent Poland}

\email{schinzel@impan.pl}

\thanks{Research supported in part by the Hungarian Academy of Sciences, the \'UNKP-17-3, New National Excellence Program of the Ministry of Human Capacities, the NKFIH grant K115479 and by the projects EFOP-3.6.1-16-2016-00022, EFOP-3.6.2-16-2017-00015 and EFOP-3.6.3-VEKOP-16-2017-00002, co-financed by the European Union and the European Social Fund.}

\begin{abstract}
Let $K$ be a number field. In the terminology of Nagell a unit $\varepsilon$ of $K$ is called {\it exceptional} if $1-\varepsilon$ is also a unit. The existence of such a unit is equivalent to the fact that the unit equation $\varepsilon_1+\varepsilon_2+\varepsilon_3=0$ is solvable in units $\varepsilon_1,\varepsilon_2,\varepsilon_3$ of $K$. Numerous number fields have exceptional units. They have been investigated by many authors, and they have important applications.

In this paper we deal with a generalization of exceptional units. We are interested in the smallest integer $k$ with $k\geq 3$, denoted by $\ell(K)$, such that the unit equation $\varepsilon_1+\dots+\varepsilon_k=0$ is solvable in units $\varepsilon_1,\dots,\varepsilon_k$ of $K$. If no such $k$ exists, we set $\ell(K)=\infty$. Apart from trivial cases when $\ell(K)=\infty$, we give an explicit upper bound for $\ell(K)$. We obtain several results for $\ell(K)$ in number fields of degree at most $4$, cyclotomic fields and general number fields of given degree. We prove various properties of $\ell(K)$, including its magnitude, parity as well as the cardinality of number fields $K$ with given degree and given odd resp. even value $\ell(K)$.

Finally, as an application, we deal with certain arithmetic graphs, namely we consider the representability of cycles. We conclude the paper by listing some problems and open questions.
\end{abstract}

\date{\today}

\maketitle

\section{Introduction}

Let $K$ be a number field. We are interested in the smallest integer $k$ having the following property:
\begin{equation}
\label{defprop}
\text{there exist units}\ \varepsilon_1,\dots,\varepsilon_k\in K\ \text{such that}\ \varepsilon_1+\dots+\varepsilon_k=0.
\end{equation}
Observe that for any even integer $k=2t$, we have a trivial assertion given by $t\times 1+t\times (-1)=0$. So we shall use the following definitions.

Write $\od(K)$ for the smallest odd $k\geq 3$ for which \eqref{defprop} is valid. Further, let $\ev(K)$ be the smallest even $k\geq 4$ for which \eqref{defprop} is valid, such that the sum appearing in \eqref{defprop} has no proper vanishing subsum. If no appropriate $k$ exists at all, then we set $\od(K)=\infty$ or $\ev(K)=\infty$, respectively. We put $\ell(K)=\min(\od(K),\ev(K))$.

Before proceeding further, we make a trivial observation. First note that if $k=\od(K)$, then the sum of units appearing in \eqref{defprop} has no proper vanishing subsum. Indeed, otherwise we would have a proper vanishing subsum with an odd number of terms, contradicting the minimality of $k=\od(K)$.

The above notions can be generalized to orders of number fields. Let $\mathcal{O}$ be an order of a number field $K$. Then we can define $\od(\mathcal{O}),\ev(\mathcal{O}),\ell(\mathcal{O})$ in the
obvious way. Note that if $\mathcal{O}$ is the maximal order of $K$, then we clearly have $\od(\mathcal{O})=\od(K)$, $\ev(\mathcal{O})=\ev(K)$, $\ell(\mathcal{O})=\ell(K)$.

In this paper we obtain several results concerning $\ell(K),\od(K),\ev(K)$ and $\ell(\mathcal{O}),\od(\mathcal{O}),\ev(\mathcal{O})$. We show among other things that $\ell(K)$ is finite for any number field $K$, apart from the cases where $K=\Q$ or $K$ is an imaginary quadratic field. Further, we prove that for any integer $k\geq 3$ there exists an order of a real quadratic number field with $\ell(\mathcal{O})=k$, and also a complex cubic number field $K$ with $\ell(K)=k$ - in the latter case excluding values $k$ of the form $k=4t^4-4t+2$. On the other hand, we show that for each $k$, there are only finitely many quadratic fields, complex cubic fields and (up to certain completely described exceptions) totally complex quartic fields with $\ell(K)\leq k$, and all these number fields can be effectively determined. Furthermore, it is shown that for any number field $K$ different from $\Q$ and the imaginary quadratic fields we have $\ev(K)<\infty$. Finally, we prove that for $d\geq 3$ there are infinitely many number fields $K$ of degree $d$ with $\ev(K)=4$, and for $d\geq 2$ there are infinitely many number fields $K$ of degree $d$ with $\od(K)=\infty$.

We give some applications of our results to certain arithmetic graphs, more precisely to graphs having vertices from the set of integers of $K$, in which two vertices $\alpha,\beta$ are connected by an edge if and only if $\alpha-\beta$ is a unit in $K$. We mention that Gy\H{o}ry has several results about and applications of such graphs (see e.g. \cite{gy2} and the references given there), and recently Gy\H{o}ry, Hajdu, Tijdeman \cite{gyht1,gyht2} and Ruzsa \cite{r} made a systematic study of the representability of such graphs. Our results allow us to extend some results from the mentioned papers, concerning representations of cycles.

Clearly, the existence of units appearing in \eqref{defprop} means that the unit equation
\begin{equation}
\label{gyeq1}
\varepsilon_1+\dots+\varepsilon_k=0
\end{equation}
has a solution in units $\varepsilon_1,\dots,\varepsilon_k$ of $K$ such that the left hand side has no proper vanishing subsum. For $k=3$, the solvability of \eqref{gyeq1} is equivalent to the existence of a unit $\varepsilon$, called exceptional unit, see Nagell \cite{n4}, such that $1-\varepsilon$ is also a unit. Obviously, we have $\ell(K)=\od(K)=3$ if and only if $K$ contains an exceptional unit. There is an extremely rich literature on unit equations of the form \eqref{gyeq1}. For given $k\geq 3$, there are results stating the finiteness of the number of solutions up to a proportional factor. Further, there are explicit upper bounds for the number of solutions and, for $k=3$, even for the size of the solutions. Moreover, for $k=3$ and for some special number fields $K$, all the solutions have been determined. Many books and survey papers deal with these equations, their generalizations and various applications; see e.g. Lang \cite{lang}, Gy\H{o}ry \cite{gynew1,gynew2}, Evertse \cite{evnew}, Mason \cite{mason}, Shorey and Tijdeman \cite{st}, Evertse, Gy\H{o}ry, Stewart and Tijdeman \cite{egyst}, Schmidt \cite{schmidt}, Smart \cite{smart}, Evertse and Gy\H{o}ry \cite{egy} and the references given there.

We organize our paper as follows. First we present our main results, followed by their proofs. After that we give applications to arithmetic graphs. We conclude the paper with some open problems.

\section{Main results}

In this section we present our main results. We split them into two parts: first we provide statements concerning the parameters $\ell(K),\od(K),\ev(K)$ and $\ell({\mathcal O}),\od({\mathcal O}),\ev({\mathcal O})$. Then we give results concerning so-called odd and even units, since they play an important role in our proofs.

\subsection{Results concerning $\ell(K),\od(K),\ev(K)$ and $\ell({\mathcal O}),\od({\mathcal O}),\ev({\mathcal O})$}

Our first theorem is a simple, but important statement.

\begin{theorem}
\label{thm1}
For any number field $K$ different from $\Q$ and the imaginary quadratic fields, $\ell(K)$ is finite. Further,
$$
\ell(K)\leq 2(d+1)\exp\{c R_K\},
$$
where
$$
c=
\begin{cases}
1/d,&\text{if}\ $r=1$,\\
29e\sqrt{r-1}\cdot r!(\log d),&\text{if}\ r\geq 2.
\end{cases}
$$
Here $r,d$ and $R_K$ denote the unit rank, the degree and the regulator of $K$, respectively.
\end{theorem}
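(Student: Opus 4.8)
Since the hypotheses on $K$ exclude exactly the fields of unit rank $0$, from now on the unit rank $r$ of $K$ is $\ge 1$, so $\mathcal O_K^{\times}$ contains a unit $\eta$ of infinite order. Let $e=[\Q(\eta):\Q]\le d$ and let $f(x)=x^{e}+a_{e-1}x^{e-1}+\dots+a_{1}x+a_{0}\in\Z[x]$ be the minimal polynomial of $\eta$; since $\eta$ is a unit, $a_{0}=\pm1$. The relation $f(\eta)=0$ rewrites as a vanishing sum of $N:=\sum_{j=0}^{e}|a_{j}|$ units of $\Q(\eta)\subseteq K$, by replacing each $a_{j}\eta^{j}$ by $|a_{j}|$ copies of $\mathrm{sgn}(a_{j})\eta^{j}$ and $a_{0}$ by $\pm1$. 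Inside this sum take a subsum $S_{0}$ containing the summand $\eta^{e}$ that is minimal (with respect to inclusion) among all vanishing subsums containing $\eta^{e}$; such $S_0$ exists since the whole sum qualifies. Then $S_{0}$ has no proper vanishing subsum: a vanishing $S'\subsetneq S_{0}$ either contains $\eta^{e}$, contradicting minimality, or it does not, in which case $S_{0}\setminus S'$ is a strictly smaller vanishing subsum still containing $\eta^{e}$, again a contradiction. Moreover $|S_{0}|\ge3$: it is neither $\{\eta^{e}\}$ (since $\eta^{e}\ne0$) nor a two-element set $\{\eta^{e},s\}$ (such an $s$ would equal $-\eta^{e}$, but no summand equals $-\eta^{e}$, because $\eta$ is not a root of unity and hence $\eta^{e-j}\ne\pm1$ for $1\le j\le e$). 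Therefore $\ell(K)\le|S_{0}|\le N<\infty$, which already settles finiteness.

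For the explicit bound the unit $\eta$ must be chosen carefully and $N$ (or a variant of it) estimated against $R_K$. When $r=1$ I would take $\eta$ to be a fundamental unit of $K$: its conjugates, hence the elementary symmetric functions $a_{j}$, are then controlled directly by $R_K$, and one checks that the vanishing sum can be taken of length at most $2(d+1)\exp\{R_K/d\}$; here the exponent $R_K/d$ reflects that the governing quantity is $|N_{K/\Q}(\eta-1)|^{1/d}$, which equals $\exp\{R_K/d\}$ up to a bounded factor in each of the three signatures with $r=1$. When $r\ge2$ I would instead pick $\eta$ from a system of fundamental units whose heights are bounded in terms of the regulator by the classical effective estimates (of the shape $h(\eta_i)\le c(r,d)R_K$, obtained from reduction of the logarithmic unit lattice together with a lower bound for heights of non-torsion units); bounding the $|a_{j}|$ of the smallest such $\eta$ by its house, and hence by $\exp\{c(r,d)R_K\}$, then yields the stated bound with $c=29e\sqrt{r-1}\,r!(\log d)$.

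The finiteness and the crude estimate $\ell(K)\le N$ are immediate; the real work is in pinning down the constant $c$. For $r\ge2$ this amounts to invoking the sharpest available effective bounds for fundamental units in terms of the regulator and combining them with the degree bound coming from the minimal polynomial. For $r=1$ it requires a more economical construction of the vanishing sum, producing a length of order $|N(\eta-1)|^{1/d}$ rather than the crude $|N(\eta-1)|$ that the bare minimal-polynomial relation gives; this is the step I expect to be the main obstacle. Throughout, the bookkeeping point that needs attention is the preservation of the ``no proper vanishing subsum'' property, so that what we bound is genuinely $\ell(K)$ and not merely the length of some vanishing sum of units.
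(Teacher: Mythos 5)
Your finiteness argument is correct and is essentially the paper's: the relation $f(\eta)=0$, where $f$ is the minimal monic polynomial of a non-torsion unit $\eta$, is a vanishing sum of $k\le (d+1)H(f)$ units, where $H(f)$ is the height of $f$. (As the paper observes, minimality of $f$ already forbids proper vanishing subsums: a vanishing subsum would give a nonzero integer polynomial of degree at most $\deg f$ vanishing at $\eta$, with leading coefficient $0$ or $1$ and coefficients dominated by those of $f$, hence equal to $f$ itself. So your extraction of an inclusion-minimal subsum $S_0$, while correct and slightly more robust, is not needed.)

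The genuine gap is the explicit bound, which is the real content of the theorem beyond finiteness, and there you only sketch intentions --- you yourself flag the $r=1$ case as ``the main obstacle''. The paper's proof is short and rests on two precise ingredients: Proposition 4.3.9 of Evertse and Gy\H{o}ry (an improvement of a classical result of Siegel), which provides a unit $\varepsilon$ of $K$ with $h(\varepsilon)\le cR_K$ where $c$ is \emph{exactly} the constant in the statement, covering both $r=1$ and $r\ge 2$ in one stroke, and inequality (1.9.3) of the same book, by which the height of the minimal monic polynomial of $\varepsilon$ is at most $2\exp\{h(\varepsilon)\}$; combining these with $\ell(K)\le (d+1)H(f)$ gives $\ell(K)\le 2(d+1)\exp\{cR_K\}$. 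In particular, your expectation that for $r=1$ one needs a ``more economical construction'' producing a sum of length of order $|N_{K/\Q}(\eta-1)|^{1/d}$ is off target: in the rank-one case the fundamental unit satisfies $h(\varepsilon)=R_K/d$, and the same crude minimal-polynomial bound, passed through the height comparison above, already yields $2(d+1)\exp\{R_K/d\}$. For $r\ge 2$ your plan (bound the height of a suitable unit linearly in $R_K$) is the right idea, but you neither prove nor cite a result delivering the specific constant $29e\sqrt{r-1}\cdot r!(\log d)$, so the stated inequality is not established by your argument as written.
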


We note that
$$
R_K\leq |D_K|^{1/2} (\log^* |D_K|)^{d-1},
$$
where $D_K$ denotes the discriminant of $K$, and $\log^*(x)=\max\{\log x,1\}$. This is an improvement of an inequality of Landau \cite{lan}; see (59) in Gy\H{o}ry and Yu \cite{gyy}.

\vskip.2cm

\noindent
{\bf Remark.} Obviously, $\ell(K)=\infty$ for $K=\Q$ and the same is true for all imaginary quadratic fields (including the Gaussian field $\Q(i)$), except for $K=\Q(\sqrt{-3})$. In the latter case we have $\ell(K)=3$.

We also mention that a statement similar to Theorem \ref{thm1} could be formulated for orders $\mathcal{O}$ of number fields, as well.

\vskip.2cm

Our next result shows that $\ell(\mathcal{O})$ can be an arbitrary integer $k\geq 3$.

\begin{theorem}
\label{thm2}
For any $k\geq 3$ there exists an order $\mathcal{O}$ of some number field $K$ with $\ell(\mathcal{O})=k$. In fact,
$\mathcal{O}$ can be chosen as an order of a real quadratic number field.
\end{theorem}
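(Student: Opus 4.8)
The plan is to realize each given $k\ge 3$ as $\ell(\mathcal O)$ for a completely explicit order of a real quadratic field. Set $t=k-2\ge 1$ and let $u>1$ be the root of $x^{2}-tx-1$, so that $u=(t+\sqrt{t^{2}+4})/2$. Since $t^{2}+4$ is never a perfect square for $t\ge 1$, $K=\Q(\sqrt{t^{2}+4})$ is a real quadratic field, $u$ is an algebraic integer, and $\mathcal O=\Z[u]=\Z+\Z u$ is an order of $K$; moreover $u(u-t)=1$ shows $u\in\mathcal O^{*}$. Rewriting $u^{2}-tu-1=0$ as the vanishing sum $u^{2}+(-u)+\dots+(-u)+(-1)=0$ with $t$ summands equal to $-u$, hence $t+2=k$ units of $\mathcal O$ altogether, will give the upper bound $\ell(\mathcal O)\le k$, provided this sum has no proper vanishing subsum (it has odd or even length according to the parity of $k$, so it witnesses $\od(\mathcal O)\le k$ or $\ev(\mathcal O)\le k$). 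The heart of the proof is the reverse inequality $\ell(\mathcal O)\ge k$.

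To control all vanishing sums of units of $\mathcal O$ I would first show that $u$ is the fundamental unit of $\mathcal O$, so that the units of $\mathcal O$ are exactly $\pm u^{n}$, $n\in\Z$. If $v>1$ denotes the fundamental unit, then $u=v^{m}$ for some $m\ge 1$; since $N(u)=-1$, this forces $N(v)=-1$ and $m$ odd. Writing $v^{2}=t'v+1$ with $t'=\mathrm{Tr}(v)\ge 1$ and iterating gives $v^{m}=F_{m}v+F_{m-1}$, where $F_{0}=0$, $F_{1}=1$, $F_{n+1}=t'F_{n}+F_{n-1}$; hence $v=(u-F_{m-1})/F_{m}$, and membership of $v$ in $\Z+\Z u$ forces $F_{m}=1$. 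As $F_{m}\ge t'^{2}+1\ge 2$ for $m\ge 3$, only $m=1$ survives, i.e. $v=u$. I expect this step — excluding the possibility that the possibly non-maximal order $\Z[u]$ contains a proper root of $u$, which could a priori occur when $t^{2}+4$ has a square factor — to be the main technical point.

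Once the units are pinned down, both the upper and the lower bound follow from one elementary estimate. Given a vanishing sum $\sum_{i=1}^{k'}\eta_{i}=0$ of units $\eta_{i}=\pm u^{n_{i}}$ that realizes $\od(\mathcal O)$ or $\ev(\mathcal O)$, multiply through by a suitable power of $u$ and collect terms of equal exponent to obtain a nonzero $P(x)=\sum_{j}c_{j}x^{j}\in\Z[x]$ with $P(u)=0$ and $\sum_{j}|c_{j}|\le k'$; here $P\ne 0$, since otherwise the terms would split into vanishing two-term subsums, impossible for a sum of odd length or for one of even length $\ge 4$ with no proper vanishing subsum. Because $u$ is irrational, $\deg P\ge 2$, and comparing the leading term with the remaining ones gives $|c_{\deg P}|\,u\le\sum_{j<\deg P}|c_{j}|$, whence $\sum_{j}|c_{j}|\ge 1+u>1+t$; being an integer, $\sum_{j}|c_{j}|\ge t+2=k$, so $k'\ge k$ and therefore $\od(\mathcal O),\ev(\mathcal O)\ge k$. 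Applying the same estimate to a hypothetical proper vanishing subsum of $u^{2}+(-u)+\dots+(-u)+(-1)=0$ — whose associated polynomial $\varepsilon_{0}x^{2}-ax-\varepsilon_{1}$ with $\varepsilon_{0},\varepsilon_{1}\in\{0,1\}$, $0\le a\le t$, would be a nonzero multiple of $x^{2}-tx-1$, forcing $\varepsilon_{0}=\varepsilon_{1}=1$ and $a=t$, i.e. the whole sum — shows there is none, giving $\ell(\mathcal O)\le k$ as well; hence $\ell(\mathcal O)=k$.
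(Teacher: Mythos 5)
Your proof is correct, and its skeleton is the same as the paper's: the same order $\Z[u]$ with $u$ a root of $x^2-(k-2)x-1$, the same witness sum $u^2-(k-2)u-1=0$, and the same reduction of an arbitrary vanishing sum of units $\pm u^{n_i}$ (after multiplying by a power of $u$ and collecting exponents) to a nonzero integer polynomial vanishing at $u$. Where you differ is in the two key inputs. For the fundamental-unit step the paper simply quotes Louboutin's theorem (its Lemma \ref{lemlo}: any real quadratic unit $>1$ is fundamental in $\Z[\varepsilon]$, except $(3+\sqrt5)/2$), whereas you give a short self-contained argument via $N(u)=-1$ and the recurrence $v^m=F_m v+F_{m-1}$; both work, and your norm condition automatically avoids Louboutin's exceptional unit. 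For the length lower bound the paper proves the combinatorial inequality $L(fg)\ge L(f)$ for $f(x)=x^2-ax-1$ (Lemma \ref{lemlfg}) by direct coefficient comparison, whereas you use the archimedean estimate $u>k-2$ to show that any nonzero $P\in\Z[x]$ with $P(u)=0$ has $\sum_j|c_j|\ge |c_{\deg P}|(1+u)>k-1$, hence $\ge k$. Your estimate is slicker for this quadratic family, but it is tied to the size of the root; the paper's Lemma \ref{lemlfg} is stated also for the cubic shapes $x^3+ax+1$ and $x^3+x^2+ax+1$ and is reused in Theorem \ref{thm3} and Proposition \ref{propconj}, which is why the authors prove it in that generality. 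You are also more explicit than the paper on two minor points the paper leaves implicit: that the collected polynomial is not identically zero (no two-term vanishing subsums in a minimal odd sum or in an $\ev$-witness), and that the witness sum $u^2+(k-2)(-u)+(-1)$ has no proper vanishing subsum; both of your verifications are correct.
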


Our next result shows that apart from the values $k$ taken by a particular quartic polynomial, $\ell(K)$ can also be an arbitrary integer $k\geq 3$.

\begin{theorem}
\label{thm3}
For any $k\geq 3$ which is not of the form $4t^4-4t+2$ $(t\in\Z\setminus\{0,1\})$ there exists a number field $K$ with $\ell(K)=k$. In fact, one can choose $K$ to be a complex cubic number field.
\end{theorem}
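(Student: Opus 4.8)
The plan is to construct, for each target value $k \ge 3$ not of the excluded form, an explicit family of complex cubic fields and exhibit a vanishing sum of units of length exactly $k$, then argue that no shorter vanishing sum exists. The natural starting point is to take a cubic field $K = \Q(\theta)$ where $\theta$ is a unit satisfying a cubic equation with small coefficients, for instance a root of $x^3 - ax^2 - 1 = 0$ or $x^3 + ax - 1 = 0$ for a parameter $a \in \Z$; such fields are complex cubic (one real and two complex embeddings) for appropriate $a$, and $\theta$ together with $\theta - 1$, powers of $\theta$, etc., furnish a supply of units. From a relation like $\theta^3 = a\theta^2 + 1$ one reads off a vanishing sum of $a+2$ units (namely $\theta^3 + (-\theta^2) + \dots + (-\theta^2) + (-1) = 0$ with $a$ copies of $-\theta^2$), and by choosing $a$ suitably one lands on a prescribed parity and size. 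The quartic polynomial $4t^4 - 4t + 2$ must emerge precisely as the list of values that are \emph{unavoidably} hit by a competing shorter relation coming from the structure of the defining equation, so the first substantial task is to pin down the right one-parameter family so that this exceptional set, and no larger one, appears.

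The second and more delicate half is the lower bound: showing $\ell(K) \ge k$, i.e.\ that $K$ admits no vanishing sum of units — with no proper vanishing subsum — of length $< k$. For this I would first invoke the basic reduction that a vanishing sum of length $m$ with no proper vanishing subsum gives, after dividing by one term, a solution of the unit equation in $m-1$ variables whose partial sums do not vanish; for small $m$ (certainly $m = 3$, the exceptional-unit case, and then $m = 4, 5, \dots$ up to $k-1$) one must show these have no solutions in $K$. The standard tool is to bound the heights of solutions: solutions of $S$-unit equations have effectively bounded height, and in a cubic field with controlled regulator one can make this bound explicit and then check by hand (or rule out by a congruence/sign argument) that no unit of $K$ of that bounded height produces a short relation. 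A cleaner route, which I expect the authors to use, is a direct archimedean argument: in a complex cubic field the units have one real embedding and a conjugate pair, and a vanishing sum of $m$ units projects under the real embedding to a vanishing sum of $m$ real numbers and under the complex embedding to a vanishing sum of $m$ points on the plane; combining the product formula (the real conjugate of a unit times the squared absolute value of its complex conjugate equals $\pm 1$) with the triangle inequality forces $m$ to be large when the fundamental unit is large, and the parameter family is chosen so that the fundamental unit grows with $a$.

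The main obstacle, I expect, is exactly the appearance of the exceptional polynomial $4t^4 - 4t + 2$: one must identify a family in which a genuinely shorter "parasitic" vanishing sum exists precisely when $k$ has this form (so those $k$ cannot be realized by this construction, explaining the exclusion), while simultaneously proving the lower bound $\ell(K) \ge k$ for all other $k$. Getting a single clean family that does both — rather than patching together several families with overlapping ranges — is the crux; the height-bound bookkeeping and the verification that the constructed sum has no proper vanishing subsum are comparatively routine once the family is fixed. A secondary issue is ensuring the constructed field is genuinely complex cubic (discriminant sign, irreducibility of the defining polynomial) for all parameter values in the relevant range, which is a finite check plus an elementary discriminant computation.
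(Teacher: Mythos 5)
Your construction half does match the paper: the authors take $f(x)=x^3+Ax+1$ with $A=k-2$, so that $f(\varepsilon)=0$ itself is a vanishing sum of $L(f)=k$ units with no proper vanishing subsum, giving $\ell(K)\le k$. The genuine gap is in your lower bound. Your route (a) — effective height bounds for unit equations in $m-1$ variables for every $m<k$ — is not available: for sums of three or more units only ineffective finiteness is known (Evertse, van der Poorten--Schlickewei), so "check all units of bounded height" cannot be carried out, and since $k$ is unbounded you cannot patch this case by case. The idea that actually makes the lower bound work, and which is absent from your plan, is structural: a complex cubic field has unit rank $1$ and only $\pm1$ as roots of unity, so every unit is $\pm\varepsilon_0^{n}$ for a fundamental unit $\varepsilon_0$; multiplying a vanishing sum of $m$ units by a suitable power of $\varepsilon_0$ turns it into $h(\varepsilon_0)=0$ with $h\in\Z[x]$ and $L(h)=m$, and Gauss's lemma gives $h=fg$ with $f$ the minimal polynomial of $\varepsilon_0$. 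The lower bound then reduces to the combinatorial inequality $L(fg)\ge L(f)$, proved explicitly in the paper for $f=x^3+ax+1$ and $x^3+x^2+ax+1$, $a\ge 3$ (Lemma \ref{lemlfg}). Your archimedean alternative could conceivably replace that inequality, but only \emph{after} the same reduction to powers of a fundamental unit, which you never make and which is where the real work lies.

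Your explanation of the excluded values $4t^4-4t+2$ is also off the mark. They are not the $k$ for which a shorter "parasitic" vanishing sum actually exists; they are the $k=A+2$ for which the method cannot certify that the chosen root $\varepsilon$ of $x^3+Ax+1$ is itself a fundamental unit rather than $\pm\eta^{m}$ with $m\ge 2$. If $\varepsilon=\pm\eta^m$, the minimal polynomial of $\eta$ would be a cubic factor of $x^{3m}+Ax^m\pm1$, and Tverberg's theorem (Lemma \ref{lembr}) says this can happen only for $m=2$, $A=4t^4-4t$ (whence the excluded $k$), plus the isolated cases $A=67$, $m=11$ (handled in the paper by switching to $f(x)=x^3+x^2+66x+1$) and $A=1040$, which is already of the excluded shape. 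Indeed the authors conjecture, with numerical support in Proposition \ref{propconj}, that the excluded $k$ are attained as well, so no shorter relation is believed to exist there: the exclusion is an artifact of verifying fundamentality, not of competing short sums. Without this input (or some substitute that pins down the fundamental unit of $\Q(\varepsilon)$), your plan cannot be completed, and the "single clean family" you hope for is obtained exactly by this trinomial-factor analysis rather than by balancing parasitic relations.
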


We are sure that the above theorem is valid for all values of $k$. So we propose the following

\vskip.2cm

\noindent
{\bf Conjecture.}
For any $k\geq 3$ there exists a number field $K$ with $\ell(K)=k$.

\vskip.2cm

We provide some numerical results to support our conjecture.

\begin{prop}
\label{propconj}
Let $K_t=\Q(\alpha_t)$, where $\alpha_t$ is a root of the polynomial $x^3+x^2+(4t^4-4t-1)x+1$ for $t\in\{-20,\dots,-1\}\cup\{2,\dots,20\}$. Then we have $\ell(K_t)=4t^4-4t+2$.
\end{prop}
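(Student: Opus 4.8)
The plan is to prove the two inequalities $\ell(K_t)\le 4t^{4}-4t+2$ and $\ell(K_t)\ge 4t^{4}-4t+2$ separately. Throughout put $B=4t^{4}-4t-1$, so that $\alpha_t$ is a root of $f_t(x)=x^{3}+x^{2}+Bx+1$ and $B\ge 7$ for every $t$ in the given range. For the upper bound, note that $f_t$ has constant term $1$, so $\alpha_t$ is a unit of $K_t$; dividing $f_t(\alpha_t)=0$ by $\alpha_t$ produces
\[
\alpha_t^{2}+\alpha_t+\alpha_t^{-1}+\underbrace{1+\dots+1}_{B}=0,
\]
a vanishing sum of $B+3=4t^{4}-4t+2$ units of $K_t$. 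Since $\{1,\alpha_t,\alpha_t^{2}\}$ is a $\Q$-basis of $K_t$ and $\alpha_t^{-1}=-\alpha_t^{2}-\alpha_t-B$, every subsum of the left hand side is an integral linear combination of $1,\alpha_t,\alpha_t^{2}$, and inspecting the possible coefficients shows that such a combination vanishes only for the empty subsum and for the whole sum; hence the displayed sum has no proper vanishing subsum. As $4t^{4}-4t+2$ is even, this gives $\ev(K_t)\le 4t^{4}-4t+2$, and so $\ell(K_t)\le 4t^{4}-4t+2$.

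For the lower bound I would argue as follows. First, $f_t$ has no rational root (since $f_t(\pm1)\neq0$), hence is irreducible, and its discriminant $-4B^{3}+B^{2}+18B-31$ is negative; thus $K_t$ is a complex cubic field of unit rank $1$, with $\mathcal{O}_{K_t}^{*}=\{\pm1\}\times\langle\eta\rangle$ for a fundamental unit $\eta$. The crucial preliminary point is that $\eta$ may be chosen equal to $\alpha_t^{-1}$, i.e. $\alpha_t$ is not a proper power of a unit; I would obtain this from Artin's inequality bounding the discriminant of a complex cubic field in terms of the largest absolute value of the conjugates of a fundamental unit — here $|D_{K_t}|$ is of order $B^{3}$ while the conjugates of $\alpha_t^{-1}$ have absolute values of orders $B$, $B^{-1/2}$, $B^{-1/2}$, so a unit $u$ with $u^{m}=\pm\alpha_t^{\pm1}$ and $m\ge2$ violates this inequality for $B$ large, the finitely many small fields being checked directly. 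Next, take a vanishing sum $\varepsilon_{1}+\dots+\varepsilon_{k}=0$ with $k=\ell(K_t)$; it has no proper vanishing subsum (this is automatic, see the Introduction). Writing $\varepsilon_{i}=\delta_{i}\eta^{n_{i}}$ with $\delta_{i}\in\{\pm1\}$, any two terms with a common exponent must carry the same sign (otherwise their sum is a proper vanishing subsum), so collecting terms by exponent gives $\sum_{n}S_{n}\eta^{n}=0$ with $\sum_{n}|S_{n}|=k$. Multiplying by a suitable power of $\eta$ turns this into $g(\eta)=0$ for a nonzero $g\in\Z[x]$ with $g(0)\neq0$ and $L(g)=k$, where $L(\cdot)$ is the sum of the absolute values of the coefficients. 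Since the minimal polynomial of $\alpha_t^{-1}$ is the reciprocal polynomial $p(x)=x^{3}+Bx^{2}+x+1$ of $f_t$, we may write $g=ph$ with $h\in\Z[x]$, and then, because all coefficients of $p$ are positive,
\[
k=L(g)\ge|g(1)|=p(1)\,|h(1)|=(B+3)\,|h(1)|;
\]
thus $k\ge B+3=4t^{4}-4t+2$ as soon as $h(1)\neq0$. When $h(1)=0$ I would peel off the highest power of $x-1$ dividing $h$ (it is coprime to $p$) and apply $|g(\zeta)|\le L(g)$ at further roots of unity $\zeta$, using $p(-1)=B-1$, $p(i)=1-B$, the bound $|p(\zeta)|\ge B-3$ for $|\zeta|=1$, and $|h'(\zeta)|\ge1$ for any nonzero $h'\in\Z[\zeta]$; for $|t|\ge2$ (so $B\ge55$) this forces $k\ge B+3$ in every case, and for the single remaining field $t=-1$ the short list of surviving polynomials $g$ is eliminated by direct computation.

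Putting the two bounds together gives $\ell(K_t)=4t^{4}-4t+2$. I expect the lower bound to be the main obstacle, and within it two points need care: the proof that $\alpha_t^{-1}$ is genuinely a fundamental unit (the discriminant estimate here is not far from sharp, so some attention is needed), and the degenerate case $h(1)=0$, where the clean evaluation at $x=1$ is lost and one must control $L(ph)$ through several roots of unity — or, for the smallest values of $|t|$, by an explicit finite computation, which fits naturally with the numerical character of the proposition.
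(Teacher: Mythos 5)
Your upper bound is fine, but both pillars of your lower bound have genuine gaps. The first is the fundamentality of $\alpha_t^{-1}$. Artin's inequality bounds $|D_{K_t}|$ by (roughly) $4\varepsilon_0^3$ for the fundamental unit $\varepsilon_0>1$ of a complex cubic field, but you feed into it the claim that ``$|D_{K_t}|$ is of order $B^3$'', which you only know for the discriminant of the defining polynomial: ${\rm disc}(f_t)=-4B^3+B^2+18B-31=[\mathcal{O}_{K_t}:\Z[\alpha_t]]^2\,D_{K_t}$, and nothing in your argument bounds the index, so a priori $|D_{K_t}|$ could stay small while $B$ grows, in which case a relation $u^m=\pm\alpha_t^{\pm1}$ with $m\ge2$ contradicts nothing. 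This is precisely the crux of the proposition: for the trinomials of Theorem \ref{thm3} fundamentality is forced by the Bremner--Tverberg result (Lemma \ref{lembr}), whose exceptional case (iii) is exactly what creates the excluded values $k=4t^4-4t+2$; for the present quadrinomial no such reducibility theorem is available, and the paper settles the point by a Magma verification, for each of the $39$ fields $K_t$, that $\alpha_t$ is a fundamental unit. Your route would in any case require a computation (of $D_{K_t}$, or of the index) for each $t$, so it does not eliminate the computational step, and as written the key hypothesis is simply unjustified.

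The second gap is in the length estimate $L(ph)\ge B+3$. The auxiliary claim ``$|h'(\zeta)|\ge1$ for any nonzero $h'\in\Z[\zeta]$'' is false once $\zeta$ is a root of unity of order outside $\{1,2,3,4,6\}$ (for instance $\zeta_5+\zeta_5^4=2\cos72^\circ<1$), so your peeling argument breaks down as soon as $h$ has a cyclotomic factor $\Phi_n$ with such $n$. It can be repaired: if $h\neq\pm x^j$ then Parseval gives $\max_{|z|=1}|h(z)|\ge\|h\|_2\ge\sqrt{2}$, and combining this with $|p(z)|\ge B-3$ on $|z|=1$ and $L(g)\ge\max_{|z|=1}|g(z)|$ yields $L(g)\ge\sqrt{2}(B-3)\ge B+3$ once $B\ge18$, i.e.\ for every $t$ in the range except $t=-1$. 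But for $t=-1$ (where $B=7$) your ``short list of surviving polynomials $g$'' is not actually finite as it stands: without a bound on the degree or gap structure of $h$ (in the spirit of Lemma \ref{filaseta}) there are infinitely many candidates, so ``direct computation'' is not yet a proof. The paper sidesteps all of this: Lemma \ref{lemlfg}, a direct coefficient comparison for $f(x)=x^3+x^2+ax+1$ with $a\ge3$ (here $a=4t^4-4t-1\ge7$), gives $L(f_tg)\ge L(f_t)=4t^4-4t+2$ uniformly, with no case distinctions and no exceptional $t$. So your outline is salvageable, but the fundamentality step must be replaced by an explicit verification, and the evaluation argument needs the Parseval fix plus a genuine finiteness argument at $t=-1$ --- or simply the paper's Lemma \ref{lemlfg}.
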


Our next theorem shows that under some restrictions, for any $k$, there are only finitely many number fields $K$ with $\ell(K)\leq k$. Clearly, some restriction is needed to obtain such a result: for example, if $\varepsilon$ is a root of the polynomial $x^n+x+1$ with $n\geq 2$, then for the number field $K=\Q(\varepsilon)$ we obviously have $\ell(K)=3$. In what follows, we write $\zeta_n$ for a primitive root of unity of order $n$.

\begin{theorem}
\label{thm4} For any $k\geq 3$, there are only finitely many quadratic fields, complex cubic number fields and totally complex quartic number fields $K$ with $\ell(K)\leq k$, in the latter case assuming that $K$ does not have a real quadratic subfield and $\zeta_3\notin K$, and all such fields can be effectively determined.
\end{theorem}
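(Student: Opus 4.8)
The plan is to reduce the statement, in each of the three cases, to a finiteness statement about unit equations together with an effective lower bound for regulators. The common mechanism is the following: if $\ell(K)\le k$, then by definition there is a vanishing sum $\varepsilon_1+\dots+\varepsilon_m=0$ of units of $K$ with $m\le k$ and no proper vanishing subsum. Dividing by $\varepsilon_m$ we get a solution of $\eta_1+\dots+\eta_{m-1}=-1$ in units, again with no vanishing subsum, hence (by the classical finiteness theorem for unit equations, e.g. Evertse--Gy\H{o}ry, as cited in \cite{egy}) the heights of the $\eta_i$ are bounded in terms of $m$ and the regulator $R_K$ only. So the units occurring are "small", and one shows that a number field of the given type containing such small units must have small regulator; since there are only finitely many number fields of bounded degree and bounded regulator (equivalently bounded discriminant, via the inequality $R_K\le |D_K|^{1/2}(\log^*|D_K|)^{d-1}$ quoted after Theorem \ref{thm1}, together with Minkowski/Hermite), and all of these can be listed effectively, we are done. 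The real work is to make "a field of the given type containing nontrivial small units has small regulator" precise and effective in each case.

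For quadratic fields the argument is cleanest. An imaginary quadratic field has $\ell(K)=\infty$ except for $\Q(\sqrt{-3})$ (see the Remark after Theorem \ref{thm1}), so we may assume $K$ is real quadratic with fundamental unit $\varepsilon_0>1$ and $R_K=\log\varepsilon_0$. Every unit is $\pm\varepsilon_0^n$, so a vanishing sum of at most $k$ units with no proper vanishing subsum yields a nontrivial $S$-unit-type relation among powers of $\varepsilon_0$ with at most $k$ terms; comparing archimedean sizes (the largest term must be matched) forces the exponents to lie in a bounded range, hence $\varepsilon_0^N$ for some bounded $N$ equals an algebraic number of bounded height built from at most $k$ signs, which bounds $\log\varepsilon_0$, i.e. $R_K$, in terms of $k$. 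Finitely many discriminants remain, all effectively listable. For complex cubic fields $K$ (signature $(1,1)$, unit rank $1$) the same idea applies with the single fundamental unit, now using that the two complex embeddings have equal absolute value; again one bounds $R_K$ and hence $|D_K|$.

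The genuinely delicate case is totally complex quartic $K$, which has unit rank $r=2$, so the unit group is essentially $\Z^2$ and one cannot argue purely by the size of a single generator. Here one uses the full strength of the effective theory of unit equations: a solution of a unit equation in $\le k$ terms with no vanishing subsum has all terms of height bounded by an explicit function of $k$ and $R_K$ (Evertse--Gy\H{o}ry). Combined with an \emph{effective lower bound} for the regulator of a quartic field — this is where the hypotheses enter — one gets $R_K\le c(k)$. The excluded configurations (a real quadratic subfield, or $\zeta_3\in K$, equivalently containing $\Q(\sqrt{-3})$) are precisely the ones producing units of arbitrarily small height (fundamental units of the subfield, or the exceptional unit $-\zeta_3$) that do \emph{not} force the regulator to be small; ruling them out guarantees that any sufficiently small units must be roots of unity, which then collapse the vanishing sum to a trivial one. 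I expect the main obstacle to be exactly this step: showing that, outside the listed exceptions, a totally complex quartic field whose unit group contains the "small" units coming from an $\ell(K)\le k$ relation must have regulator below an explicit bound — this requires quantitative control (a Dobrowolski/Voutier-type height lower bound for non-torsion units, or a direct effective lower bound for quartic regulators) and a careful case analysis of which quartic fields have small-height units, which subfields they come from, and why the two excluded families are the only obstructions. Once $R_K$, hence $|D_K|$, is bounded, effectivity is immediate from the finiteness and listability of number fields of bounded discriminant.
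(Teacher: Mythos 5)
Your proposal has genuine gaps, and they sit exactly at the points you flag as ``the real work''. First, a factual slip that matters: a totally complex quartic field has signature $(0,2)$, hence unit rank $r_1+r_2-1=1$, not $2$; all three field types in the theorem have rank one, and this is the engine of the actual proof. Second, the effective height bound you invoke does not exist: for unit equations with more than two unknown terms (i.e. $k\geq 4$ in \eqref{gyeq1}) the only known finiteness results for nondegenerate solutions (Evertse, van der Poorten--Schlickewei) come from the Subspace Theorem and are \emph{ineffective}; Baker-type bounds ``in terms of $m$ and $R_K$'' are available only for the two-term equation. So the central step of your quartic argument has no theorem behind it. Third, even if you could bound $R_K$ in terms of $k$, your concluding step is false: the inequality $R_K\leq |D_K|^{1/2}(\log^*|D_K|)^{d-1}$ bounds the regulator by the discriminant, not conversely, and there are infinitely many number fields of fixed degree with bounded regulator --- precisely the CM-type quartic fields $K$ lying above a fixed real quadratic $L$, whose unit group is essentially $O_L^*$ times roots of unity. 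This is exactly the family excluded by the hypotheses (see the Remark after Theorem \ref{thm4}): for such $K$ one has $\ell(K)\leq\ell(L)$ with witnesses living in $L$, so they put no constraint on $K$ at all; the role of the exclusions is not about ``units of arbitrarily small height'' but about guaranteeing that the fundamental unit actually generates $K$. (Your elementary archimedean sketch for real quadratic and complex cubic fields can be made to work, since there the fundamental unit does generate the field; but it is the quartic case that carries the content of the theorem.)

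For comparison, the paper's proof avoids transcendence tools entirely and runs through lengths of polynomials. Since the rank is one, every unit is a root of unity times a power of a fundamental unit $\varepsilon$; multiplying a vanishing sum of at most $k$ units by a suitable power of $\varepsilon$ gives $h(\varepsilon)=0$ with $h\in\Z[x]$, $L(h)\le k$, hence $h=fg$ where $f$ is the minimal monic polynomial of $\varepsilon$. Mignotte's inequality $L(fg)\geq 2^{-\deg f}L(f)$ (Lemma \ref{mignotte}) then gives $L(f)\leq 16k$, and since the hypotheses force $K=\Q(\varepsilon)$, only finitely many fields occur; effectivity comes from the Filaseta--Solan-type bound (Lemma \ref{filaseta}) on $L(g)$ and $\deg g$. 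When $K$ contains roots of unity other than $\pm1$, a short case analysis (using $\zeta_3\notin K$ and the absence of a real quadratic subfield) leaves only $\pm i$, and the same scheme is run over $\Q(i)$: one writes the relation as $P(\varepsilon)+iQ(\varepsilon)=0$, takes norms to get $P^2+Q^2=fg$ with $L(P^2+Q^2)\leq k^2$, and applies Mignotte again to get $L(f)\leq 16k^2$. If you want to salvage your approach, you would have to replace the appeal to effective unit-equation bounds by an argument of this polynomial-length type, and replace ``bounded regulator implies finitely many fields'' by ``a generating unit of bounded height implies finitely many fields''.
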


\noindent
{\bf Remark.} There are infinitely many totally complex quartic fields $K$ having a real quadratic subfield $L$. As for all such fields $K$ we have $\ell(K)\leq\ell(L)$, the above statement is not valid for them. Similarly, there are infinitely many totally complex quartic fields $K$ with $\zeta_3\in K$, and hence with $\ell(K)=3$. Thus they also have to be excluded from Theorem \ref{thm4}.

\vskip.2cm

The value of $\od(K)$ can be infinite in non-trivial cases (i.e. excluding $\Q$ and the imaginary quadratic fields) as well.

\begin{theorem}
\label{thm5}
Let $d\geq 2$. There are infinitely many number fields $K$ of degree $d$ with $\od(K)=\infty$.
\end{theorem}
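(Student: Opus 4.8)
The plan is to construct, for each $d\geq 2$, infinitely many degree-$d$ number fields $K$ whose unit group is so ``thin'' that no odd vanishing sum of units can occur. The key obstruction to an odd vanishing sum $\varepsilon_1+\dots+\varepsilon_k=0$ (with $k$ odd) is a congruence one: if all units of $K$ are congruent to $\pm 1$ modulo some fixed prime ideal or rational prime $p$, say $p=2$, then reducing the relation modulo $p$ forces the number of terms to be even, a contradiction. So the heart of the matter is to produce infinitely many degree-$d$ fields $K$ in which $2$ is (totally) inert, or more generally in which every unit reduces to $\pm 1$ in the residue field(s) above $2$; the cleanest sufficient condition is that $2$ be inert in $K$, so that the residue field is $\mathbb{F}_{2^d}$ and one instead argues modulo a prime above $2$ after a further reduction, or — simpler still — that $2$ remain prime and the only units that are $\equiv$ a root of unity are genuinely $\pm1$ in the relevant quotient. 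Actually the slickest route is: choose $K$ so that $2$ is inert and the residue field $\mathcal{O}_K/2\mathcal{O}_K \cong \mathbb{F}_{2^d}$ has the property that reduction of units lands in $\mathbb{F}_2^\times=\{1\}$; this happens precisely when the image of the unit group mod $2$ is trivial, equivalently when every unit is $\equiv 1 \pmod{2\mathcal{O}_K}$, and then an odd sum of units is $\equiv 1+\dots+1 = k \equiv 1 \pmod 2$, which is nonzero — so $\od(K)=\infty$.

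First I would set up the congruence argument in full generality: let $\mathfrak{p}$ be a prime of $K$ above a rational prime $p$, with residue field $k_{\mathfrak p}$, and let $U$ denote the image of $\mathcal{O}_K^\times$ in $k_{\mathfrak p}^\times$. If a vanishing sum $\varepsilon_1+\dots+\varepsilon_k=0$ has $k$ odd, then reducing mod $\mathfrak p$ gives $\sum \bar\varepsilon_i = 0$ in $k_{\mathfrak p}$ with each $\bar\varepsilon_i\in U$. When $p=2$ and $U=\{1\}$ this says $k\cdot 1 = 0$ in characteristic $2$, forcing $k$ even; hence $\od(K)=\infty$. (Note this handles sums with no proper vanishing subsum and in fact all odd sums at once, since $\od(K)$ by definition requires no proper vanishing subsum but the reduction argument needs no such hypothesis.) Second, I would exhibit infinitely many degree-$d$ fields realizing $U=\{1\}$ mod some prime above $2$. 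For $d=2$ this is classical: take $K=\mathbb{Q}(\sqrt{m})$ with $m\equiv 5\pmod 8$, so $2$ is inert, the residue field is $\mathbb{F}_4$, and the fundamental unit, having norm $\pm1$, reduces into $\mathbb{F}_2^\times=\{1\}$ (using that $\mathbb{F}_4^\times$ has order $3$ and the norm/Frobenius constraints pin the unit down, or more directly: the fundamental unit of such a field is $\equiv 1 \pmod 2$, a well-known fact one checks from $m\equiv 5\pmod 8$); there are infinitely many such $m$. For general $d$, I would take $K=\mathbb{Q}(\theta)$ with $\theta$ a root of a polynomial that is Eisenstein-like or irreducible mod $2$ of degree $d$ (so $2$ is inert), chosen moreover so that the defining polynomial is $x^d - a x - a$ or a similar sparse family with $a$ in a suitable residue class guaranteeing $2$ inert and $\mathcal{O}_K = \mathbb{Z}[\theta]$ with $\theta \equiv$ a fixed element; then $\mathcal{O}_K/2\mathcal{O}_K = \mathbb{F}_2[\theta]/(\bar f) = \mathbb{F}_{2^d}$, and I would arrange (by a congruence on $a$, or by restricting to subfields/compositum with the quadratic example) that the image of $\mathcal{O}_K^\times$ mod $2$ is trivial — for instance by ensuring $-1$ is the only root of unity, $a$ is chosen so the relevant units are $\equiv 1$, and invoking that infinitely many $a$ in an arithmetic progression give non-isomorphic fields (distinct discriminants).

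The main obstacle is the second step: controlling the image of the full unit group modulo $2$ for a family of degree-$d$ fields, since a priori units of infinite order could reduce to generators of $\mathbb{F}_{2^d}^\times$. I expect the cleanest fix is to not insist $2$ is inert in $K$ itself but to use a prime $\mathfrak p$ of residue degree one (so $k_{\mathfrak p}=\mathbb{F}_2$ and then trivially $U=\{1\}$ automatically!) — but a degree-one prime above $2$ means $2$ is not inert. The resolution: instead require that $2$ splits in $K$ with a prime $\mathfrak p$ of degree $1$ and residue field $\mathbb{F}_2$; then automatically every unit is $\equiv 1 \pmod{\mathfrak p}$, since $\mathbb{F}_2^\times = \{1\}$, and the congruence argument gives $\od(K)=\infty$ with no further work. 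So the construction becomes: find infinitely many degree-$d$ fields $K$ in which $2$ has a prime divisor of residue degree $1$ (e.g. by Chebotarev, or explicitly by choosing $f(x)$ with a simple linear factor mod $2$, like $f(x)=(x)(\text{irreducible of degree }d-1) + 2\cdot(\dots)$, i.e. $f\equiv x g(x)\pmod 2$ with $g$ separable) — then $K$ has a degree-one prime above $2$, hence $\od(K)=\infty$, and varying the construction over infinitely many discriminants yields infinitely many such $K$. The remaining routine points are: verifying irreducibility/degree of the chosen polynomials, checking $\mathfrak p$ has residue degree $1$ (clear from the factorization of $f$ mod $2$ at an unramified place, by Dedekind's theorem), and confirming infinitude of the family up to isomorphism (by unbounded discriminant or by a direct non-isomorphism argument) — none of which I would grind through here.
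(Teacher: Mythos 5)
Your final argument is correct and rests on the same key observation as the paper's own proof: if $K$ has a prime ideal $\mathfrak{p}$ above $2$ whose residue field is $\mathbb{F}_2$, then every unit of $\mathcal{O}_K$ is $\equiv 1\pmod{\mathfrak{p}}$, so an odd sum of units reduces to $1\neq 0$ in $\mathbb{F}_2$ and $\od(K)=\infty$. The difference lies in how the construction is realized and, above all, in how infinitude is proved. The paper takes $f_A(x)=x^d+2A^2x+2$, Eisenstein at $2$, so that $2$ is totally ramified and the root $\alpha$ itself generates the degree-one prime (since $N_{K_A/\Q}(\alpha)=\pm 2$); for this one-parameter family the infinitude of the fields $K_A$ is genuinely nontrivial and is settled via Siegel's theorem on integral points of the non-rational curve $X^d+2XY^2+2=0$ (or, as the paper remarks, via Hilbert irreducibility). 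You instead ask that $f\equiv xg(x)\pmod 2$ with $g$ separable and prime to $x$, and you defer infinitude to ``unbounded discriminant or a direct non-isomorphism argument''; that is the one step you should not wave away, since it is precisely where the paper had to work. For your more flexible family it can be done cheaply --- for instance, additionally require $f$ to be Eisenstein at a varying odd prime $p$ (compatible with the condition mod $2$ by the Chinese Remainder Theorem on the coefficients): then $f$ is irreducible, $2$ has a degree-one prime, and $p$ ramifies in $K$, so distinct $p$ yield non-isomorphic fields --- but some such argument must be supplied. Two smaller remarks: your first attempt (forcing the image of the unit group to be trivial in $\mathbb{F}_{2^d}^\times$ when $2$ is inert) is rightly abandoned, and the claim made there that for $m\equiv 5\pmod 8$ the fundamental unit of $\Q(\sqrt{m})$ is $\equiv 1\pmod 2$ is false (for $m=5$ the fundamental unit $(1+\sqrt{5})/2$ reduces to a generator of $\mathbb{F}_4^\times$); also, a degree-one prime above $2$ need not come from a split prime --- in the paper's family $2$ is totally ramified, which serves equally well, since only the residue field matters.
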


Our final result in this subsection shows that $\ev(K)$ can take its minimal value (that is $4$) for infinitely many number fields, having any prescribed degree $\geq 3$. Note that in view of Theorem \ref{thm4}, the case $d=2$ has to be excluded, so our statement is best possible in this respect.

\begin{theorem}
\label{thm6}
Let $d\geq 3$. There are infinitely many number fields $K$ of degree $d$ with $\ev(K)=4$.
\end{theorem}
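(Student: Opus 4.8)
The plan is to exhibit, for each degree $d\geq 3$, an explicit infinite family of number fields $K$ of degree $d$ that contain four units summing to zero with no proper vanishing subsum, while ruling out the existence of an exceptional unit (which would force $\ell(K)=\od(K)=3$, hence $\ev(K)$ would still be $4$, but we want $\ev(K)=4$ genuinely realized with no proper subsum, so the relation itself must be irreducible). The cleanest approach is to look for units $\varepsilon$ with $\varepsilon+1$ also a unit of a prescribed shape: indeed, if both $\varepsilon$ and $1+\varepsilon$ are units, then
\begin{equation*}
\varepsilon + 1 + (-(1+\varepsilon)) = 0
\end{equation*}
is a vanishing sum of three units, giving $\ell(K)=3$ — which is \emph{not} what we want. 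Instead, the relevant mechanism for $\ev(K)=4$ is a relation $\varepsilon_1+\varepsilon_2+\varepsilon_3+\varepsilon_4=0$ in units with no vanishing subsum of length $2$ (i.e. no $\varepsilon_i=-\varepsilon_j$) and, crucially, \emph{no} exceptional unit in $K$ so that $\od(K)>3$ and there is no shorter even relation either.

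Concretely, I would proceed as follows. First, fix a degree $d\geq 3$ and consider a parametric family such as $K_n=\Q(\theta_n)$ where $\theta_n$ is a root of a trinomial or a suitably chosen polynomial of degree $d$ engineered so that $\theta_n$ and $\theta_n-1$ (or $\theta_n$ and $\theta_n+2$, etc.) are units; one then gets a length-$4$ vanishing sum by writing, e.g., $\theta_n + (-\theta_n+1) + 1 + (-2)$ — but this uses rational units, so one must instead arrange three or four genuinely independent units. A more robust route: take $K_n$ to contain a unit $\eta$ with $\eta^2$ an exceptional-unit-free configuration, namely pick $\eta$ a unit such that $\eta-1$ and $\eta+1$ are both units; then $(\eta-1)+(-(\eta+1))+1+1=0$ is a vanishing sum of four units. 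The existence of infinitely many such fields of each degree $d\geq 3$ can be secured by an explicit polynomial family (a trinomial $x^d+ax+b$ with $b=\pm1$ and $a$ chosen so that $\pm1$ are not roots but $x\mp1$ divides a related norm form), checking irreducibility via the usual trinomial irreducibility criteria or by bounding $a$ appropriately. Second, one verifies the sum has no proper vanishing subsum: this amounts to checking $\eta-1\neq \pm(\eta+1)$, $\eta-1\neq \mp1$, $\eta+1\neq\mp 1$, all of which hold for all but finitely many members of the family, and that no sub-triple vanishes, which is immediate since a vanishing triple would be $(\eta-1)+(-(\eta+1))+1=-1\neq0$ etc. Third — and this is the delicate part — one must show $\ev(K_n)=4$ exactly, i.e. that $\ev(K_n)\neq\infty$ is automatic (it is, by Theorem \ref{thm1} once $K_n$ is not $\Q$ or imaginary quadratic, which holds as $d\geq 3$), and that the value is not smaller; but $4$ is already the minimal possible even value by definition, so there is nothing to rule out there. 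Hence the only real content is producing the infinite family with the length-$4$ relation and verifying non-degeneracy.

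The main obstacle I anticipate is \textbf{guaranteeing an infinite supply of fields of each fixed degree $d$} in which the construction works, while controlling that the polynomials defining them are irreducible of degree exactly $d$ (not a proper factor) and pairwise non-isomorphic (to get genuinely infinitely many distinct fields, one typically compares discriminants or ramification). For this I would use a one-parameter polynomial family and a standard argument: either the discriminants grow with the parameter, or one localizes at varying primes. A secondary subtlety is ensuring the four units are not forced into a shorter relation by some coincidence in small members of the family — handled by simply discarding the finitely many bad parameters. I expect the write-up to hinge on choosing the right explicit family: a natural candidate is $K_n=\Q(\alpha_n)$ with $\alpha_n$ a root of $x^d - n x^{d-1} + \cdots \pm 1$ arranged so that $\alpha_n$, $\alpha_n-1$ are units, or more simply taking $K$ to contain $\Q(\beta)$ with $\beta,\beta-1$ units and then padding the degree up to $d$ by an auxiliary factor — though padding risks introducing a subfield with an exceptional unit, so some care with the tower is needed.
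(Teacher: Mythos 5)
Your central identity is the right one, and it is in fact the same as the paper's: two units differing by $2$ (your $\eta-1$ and $\eta+1$; the paper's $\xi_N-2$ and $\xi_N$) give $(\eta-1)-(\eta+1)+1+1=0$, the $2$-term subsums visibly do not vanish, and since $4$ is the least admissible even value nothing further has to be excluded once such a relation exists (your worry about exceptional units is beside the point, since $\od(K)=3$ has no bearing on $\ev(K)$). The genuine gap is that you never actually produce, for each fixed $d\geq 3$, an infinite family of degree-$d$ fields containing such a pair of units. The concrete suggestions you offer do not close it: for a trinomial $x^d+ax\pm 1$ with root $\theta$, requiring $\theta-1$ and $\theta+1$ (or $\theta$ and $\theta\pm 2$) to be units forces $|f(1)|=|f(-1)|=1$ (resp.\ $|f(\pm 2)|=1$), which confines $a$ to a finite set for fixed $d$, so varying $a$ cannot yield infinitely many fields; padding a small field up to degree $d$ is impossible for odd $d$ (no quadratic subfield); and your ``natural candidate'' asking for $\alpha_n$ and $\alpha_n-1$ to be units is a slip -- that gives a $3$-term relation, not the needed $4$-term one. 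The paper resolves this with the explicit family $f_N(x)=x(x-2)(x-a_3)\cdots(x-a_{d-1})(x-N)-1$, irreducible by a classical criterion (Westlund, Fl\"ugel), whose root $\xi_N$ has $\xi_N$ and $\xi_N-2$ (indeed all $\xi_N-a_i$, $\xi_N-N$) as units because $f_N$ takes the value $-1$ at $0,2,a_3,\dots,a_{d-1},N$.

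The second missing piece is the infinitude of \emph{distinct} fields, which you replace by ``discriminants grow with the parameter, or one localizes at varying primes.'' For a family like this that is not immediate: the field discriminant of $K_N$ may be far smaller than the polynomial discriminant of $f_N$, and nothing you wrote rules out that all but finitely many $K_N$ coincide. The paper argues differently: if only finitely many fields arose, then infinitely many $\xi_{N'}$ would lie in a single fixed $K_N$, producing infinitely many pairwise distinct, non-proportional solutions $(-\xi_{N'},\xi_{N'}-2,1,1)$ of $\varepsilon_1+\varepsilon_2+\varepsilon_3+\varepsilon_4=0$ without vanishing subsums, contradicting the finiteness theorems of Evertse and van der Poorten--Schlickewei on unit equations (alternatively, a Siegel-theorem argument as in the proof of Theorem \ref{thm5} works). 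Until you supply both an explicit irreducible degree-$d$ family with the required unit pair and such an infinitude argument, the proposal is a correct plan but not a proof.
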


\subsection{Results concerning odd and even units}

In this subsection we investigate the existence of odd and even units in a number field $K$. This is an important question from our viewpoint: as we shall see soon, if $K$ contains an odd unit then $\od(K)$ is finite, and, similarly, if $K$ contains an even unit then $\ev(K)$ is finite.

We need a little preparation. For any integer polynomial
$$
g(x)=b_nx^n+b_{n-1}x^{n-1}+\dots+b_1x+b_0
$$
write
$$
L(g)=|b_n|+|b_{n-1}|+\dots+|b_1|+|b_0|
$$
for the length of $g(x)$. The properties of lengths of polynomials have been studied by several authors; see e.g. \cite{fs,s2,s3} and the references given there.

We call an algebraic integer $\alpha$ even, if $L(f)$ is even, where $f(x)$ is the minimal monic polynomial of $\alpha$ (over $\Q$); otherwise $\alpha$ is odd. Observe that $\alpha$ is even if and only if $f(1)$ is even.

Let $\varepsilon$ be a unit of $K$, different from the roots of unity, and let $f(x)=x^n+a_{n-1}x^{n-1}+\dots+a_1x+a_0$ be the minimal monic polynomial of $\varepsilon$. Observe that then the equation
$$
\varepsilon^n+a_{n-1}\varepsilon^{n-1}+\dots+a_1\varepsilon+a_0=0
$$
shows that \eqref{defprop} is satisfied in $K$ with $k=L(f)$ terms, and also that there cannot be proper vanishing subsums of the left hand side. In particular, we have that if $\varepsilon$ is odd then $\od(K)<\infty$, and if $\varepsilon$ is even then $\ev(K)<\infty$. In what follows, this observation will be frequently used.

The next theorem shows that excluding the trivial cases, every number field contains a non-trivial even unit.

\begin{theorem}
\label{newcor1}
Every number field $K$ different from $\Q$ and the imaginary quadratic fields, contains an even unit different from $\pm 1$. In particular, we have $\ev(K)<\infty$.
\end{theorem}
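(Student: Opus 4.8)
The plan is to exhibit inside $K$ a single even unit of infinite order; the assertion $\ev(K)<\infty$ then follows immediately from the observation made just before the theorem (if $\varepsilon$ is a non-torsion unit whose minimal monic polynomial $f$ has $L(f)$ even, then \eqref{defprop} holds with $k=L(f)$ terms and no proper vanishing subsum). The one idea needed is the elementary remark that the multiplicative group of a residue field of characteristic $2$ has \emph{odd} order, so that raising any unit to a suitable power makes it congruent to $1$ modulo a prime above $2$; and being $\equiv 1$ modulo a prime above $2$ is exactly what forces $f(1)$ to be even, since $\varepsilon$ is even if and only if $f(1)$ is even.

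Concretely, I would proceed as follows. Since $K\neq\Q$ and $K$ is not imaginary quadratic, Dirichlet's unit theorem gives that the unit rank $r$ of $K$ satisfies $r\geq 1$ (these two families being precisely the number fields of unit rank $0$); hence there is a unit $\eta\in\mathcal{O}_K^{*}$ of infinite order. Fix a prime ideal $\mathfrak{P}$ of $\mathcal{O}_K$ lying above $2$ and let $q=|\mathcal{O}_K/\mathfrak{P}|$, a power of $2$. Put $\varepsilon=\eta^{\,q-1}$. Then $\varepsilon$ is again a unit of infinite order, so in particular $\varepsilon\neq\pm1$, while, since the group $(\mathcal{O}_K/\mathfrak{P})^{*}$ has order $q-1$, we get $\varepsilon\equiv 1\pmod{\mathfrak{P}}$.

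It remains to see that $\varepsilon$ is even. Let $f(x)=x^n+a_{n-1}x^{n-1}+\dots+a_0\in\Z[x]$ be the minimal monic polynomial of $\varepsilon$ over $\Q$. Reducing the relation $f(\varepsilon)=0$ modulo $\mathfrak{P}$, and using that $\mathcal{O}_K/\mathfrak{P}$ has characteristic $2$ and contains $\mathbb{F}_2$, we obtain $\bar f(\bar\varepsilon)=0$ in $\mathcal{O}_K/\mathfrak{P}$, where $\bar f\in\mathbb{F}_2[x]$ is the reduction of $f$ mod $2$. Since $\bar\varepsilon=1$ this reads $\overline{f(1)}=0$ in $\mathcal{O}_K/\mathfrak{P}$; as $f(1)\in\Z$ maps to $\mathcal{O}_K/\mathfrak{P}$ through the injection $\mathbb{F}_2\hookrightarrow\mathcal{O}_K/\mathfrak{P}$, this already forces $f(1)\equiv 0\pmod 2$, i.e. $\varepsilon$ is even (and $\varepsilon\neq\pm1$). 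By the observation preceding the theorem, $\ev(K)\leq L(f)<\infty$. I do not foresee a real obstacle here: once $\varepsilon=\eta^{\,q-1}$ is written down, the rest is a short verification. The only points needing a word of care are that the hypothesis ``$K\neq\Q$ and not imaginary quadratic'' is used precisely to secure a unit of infinite order (so that $\varepsilon\neq\pm1$), and that one should reduce modulo a single prime $\mathfrak{P}$ above $2$ rather than modulo $2\mathcal{O}_K$, whose unit group need not have odd order when $2$ ramifies. (Equivalently, contracting $\mathfrak{P}$ to $\Q(\varepsilon)$ shows that a prime above $2$ divides $(1-\varepsilon)$, whence $2\mid N_{\Q(\varepsilon)/\Q}(1-\varepsilon)=f(1)$.)
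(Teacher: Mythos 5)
Your proof is correct, but it takes a genuinely different route from the paper's. The paper obtains the theorem as an immediate consequence of Lemma \ref{newthm1}: for any unit $\varepsilon$ with minimal polynomial $F$, the power $\varepsilon^{2^n-1}$ is even whenever $n$ is the degree of an irreducible factor of $\phi(F)$ over ${\mathbb F}_2$; the proof of that lemma goes through the factorization of $x^{2^n}-x$ over ${\mathbb F}_2$, the cyclotomic polynomial $\Phi_{2^n-1}$, Dedekind's theorem on the splitting of $2$ in $\Q(\zeta_{2^n-1})$ and a norm-divisibility computation, fed into the criterion of Lemma \ref{lemnewsc}. You instead reduce directly modulo a prime ideal $\mathfrak{P}$ of $\mathcal{O}_K$ above $2$: since $(\mathcal{O}_K/\mathfrak{P})^{*}$ has odd order $q-1$, the power $\varepsilon=\eta^{q-1}$ of a non-torsion unit $\eta$ (which exists exactly because the hypothesis on $K$ forces unit rank $r\geq 1$ --- the same place the hypothesis enters in the paper) satisfies $\varepsilon\equiv 1\pmod{\mathfrak{P}}$, whence $f(1)\equiv f(\varepsilon)=0\pmod{\mathfrak{P}}$, so $2\mid f(1)$ and $\varepsilon$ is even while $\varepsilon\neq\pm1$; the observation preceding the theorem then yields $\ev(K)\leq L(f)<\infty$. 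Your argument is shorter and more elementary, avoiding cyclotomic fields and Dedekind's theorem entirely by working with an actual prime ideal of $K$ rather than with factorizations of $F$ modulo $2$, and it still exhibits an explicit even power of any prescribed non-torsion unit. What the paper's longer detour buys is the finer auxiliary statements (Lemmas \ref{lemnewsc} and \ref{newthm1}), recorded as results of independent interest, which tie the exponent $2^n-1$ to the mod-$2$ factorization of the minimal polynomial of the original unit. It is worth noting that the paper itself uses essentially your residue-field trick elsewhere, in the proof of Theorem \ref{thmcyc}, to produce a unit $\varepsilon$ with $1\pm\varepsilon$ non-units.
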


As it was mentioned in the Remark after Theorem \ref{thm1}, for $K=\Q$ and the imaginary quadratic fields with the exception of $\Q(\zeta_3)$, we have $\ell(K)=\infty$. Hence $\ev(K)=\infty$ is also valid for these fields. Further, it is easy to check that $\ev(\Q(\zeta_3))=\infty$, too.

Theorems \ref{thm5} and \ref{newcor1} imply that for $d\geq 3$ and for $d=2$ with $K$ quadratic real, $\ev(K)<\infty$ holds, and there are infinitely many number fields $K$ of degree $d$ with $\od(K)=\infty$.

For $d\leq 4$, we have the following more explicit result.

\begin{theorem}
\label{newcor2}
Let $K$ be a real quadratic, a complex cubic or a totally complex quartic field; in the latter case assume that $K$ does not contain roots of unity different from $\pm 1$. Suppose that $K$ has a fundamental unit which is even. Then all units of $K$ are even. In particular, in these cases we have $\od(K)=\infty$.
\end{theorem}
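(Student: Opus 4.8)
The plan is to reduce the statement to a closure property: in each of the three cases the unit group (modulo torsion $\pm 1$) is cyclic, generated by a fundamental unit $\eta$, so every unit is $\pm\eta^m$ for some $m\in\Z$. Since $L(f)$ is unchanged when we pass from $\varepsilon$ to $-\varepsilon$ (the minimal polynomials are related by $x\mapsto -x$, which only permutes $|a_i|$ up to sign) and also unchanged under $\varepsilon\mapsto\varepsilon^{-1}$ (the minimal polynomial is the reciprocal, up to sign of constant term, since $\varepsilon$ is a unit), parity of a unit depends only on $m$ up to sign. Hence it suffices to show: if $\eta$ is even then $\eta^m$ is even for all $m\geq 1$. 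I would prove this by induction on $m$, and the key tool is to track the minimal polynomial of $\eta^m$, or rather its parity $f_m(1)\bmod 2$.

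The heart of the argument is therefore a recursion mod $2$ for $f_m(1)$, where $f_m$ is the minimal monic polynomial of $\eta^m$. In the real quadratic case this is transparent: if $\eta$ has minimal polynomial $x^2-ax\pm 1$, then $\eta$ and its conjugate $\eta'$ satisfy $\eta+\eta'=a$, $\eta\eta'=\pm 1$, and the power sums $p_m=\eta^m+\eta'^m$ obey $p_m=a\,p_{m-1}\mp p_{m-2}$; the minimal polynomial of $\eta^m$ is $x^2-p_m x\pm 1$, so $f_m(1)=1\mp p_m +\,(\pm 1)$. One checks $f_1(1)=\eta$ even means $a$ even (after adjusting the norm sign), then shows the recursion forces $f_m(1)$ even for all $m$; the norm being $\pm1$, the even/odd dichotomy is controlled purely by $a\bmod 2$ and the sign of the norm. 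For the complex cubic and totally complex quartic cases (with no extra roots of unity, so unit rank $1$) one does the same with Newton's identities: $f_m$ has degree $d=3$ or $4$, its coefficients are (up to sign) elementary symmetric functions of the conjugates $\eta^{(i)m}$, and these are polynomials with integer coefficients in the power sums $p_j=\sum_i \eta^{(i)j}$, which themselves satisfy the linear recurrence with characteristic polynomial $f=f_1$. Working the whole computation modulo $2$, $f_m(1)\bmod 2$ becomes a function of $f_1(1)\bmod 2$ together with finitely many congruence data of $f_1$, and I would verify case by case that $f_1(1)\equiv 0$ propagates.

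Concretely I would phrase it as: let $R=\Z[x]/(2,f_1(x))$ and let $\bar\eta$ be the image of $x$; then $f_m(1)\bmod 2$ is the norm-type quantity $\prod_i(1-\eta^{(i)m})\bmod 2$, which equals the image in $\mathbb{F}_2$ of $\mathrm{Res}(f_1, x^m-?)$-type expressions, and $\eta$ even says $f_1(1)=\prod_i(1-\eta^{(i)})\equiv 0\pmod 2$, i.e.\ $\bar\eta=1$ is a root of $\bar f_1$ over $\mathbb{F}_2$, equivalently $(x-1)\mid \bar f_1$ in $\mathbb{F}_2[x]$. But then $(x-1)\mid (x^m-1)$ for all $m\geq 1$, and since $\prod_i(1-\eta^{(i)m})$ is, up to sign, $\mathrm{Res}_x(f_1(x), x^m-1)/(\text{leading stuff})$ reduced mod $2$, having $x-1$ as a common factor of $\bar f_1$ and $x^m-1$ forces this resultant to vanish in $\mathbb{F}_2$, i.e.\ $f_m(1)\equiv 0\pmod 2$ — provided $f_m$ is actually the minimal polynomial of $\eta^m$, i.e.\ $[\Q(\eta^m):\Q]=d$. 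This last point is exactly where the hypotheses enter: for real quadratic, complex cubic and torsion-free totally complex quartic fields, $\eta^m$ is never a root of unity and generates the same field as $\eta$ (for cubic and quartic this uses that the field has no proper subfield of the relevant degree or that $\eta^m$ irrational forces degree $d$), so $L(f_m)$ genuinely computes the parity of the unit $\eta^m$.

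The main obstacle I anticipate is precisely the bookkeeping needed to guarantee $\deg f_m = d$ and that $f_m$ is monic with $f_m(1)\equiv \prod_i(1-\eta^{(i)m})\pmod 2$ in each case — in the quartic case one must rule out $\eta^m$ landing in a quadratic subfield, which is where ``no roots of unity other than $\pm1$'' together with totally complexness is used (a totally complex quartic with a quadratic subfield would have that subfield imaginary quadratic, forcing extra roots of unity or making $\eta$ non-fundamental). The resultant/$\mathbb{F}_2$ argument itself is clean once set up; assembling it uniformly over the three cases, and handling the sign of the norm $N(\eta)=\pm1$ (which shifts $f_m(1)$ by the constant term $\pm1$ but never affects the mod-$2$ conclusion once we know $(x-1)\mid\bar f_1$), is the routine part. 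Finally, $\od(K)=\infty$ follows immediately: by the observation in the excerpt a unit $\varepsilon\neq\pm1$ with odd $L(f)$ would give $\od(K)<\infty$, but we have just shown every such unit is even, and $\pm1$ contribute nothing, so no odd vanishing sum of units with no proper vanishing subsum — in particular none coming from a minimal polynomial — can have an odd number of terms; a short additional remark handles why no odd vanishing sum exists at all, using that every unit is $\pm\eta^m$ and reducing an arbitrary relation $\sum\pm\eta^{m_i}=0$ modulo a suitable prime above $2$.
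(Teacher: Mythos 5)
The one concretely wrong step is your handling of the quartic case: you claim that a totally complex quartic field with a quadratic subfield must have that subfield imaginary quadratic, hence contain roots of unity other than $\pm1$ (or force $\eta$ non-fundamental). That is false: for instance $K=\Q\bigl(\sqrt{-3+\sqrt{2}}\,\bigr)$ is a totally complex quartic field whose only roots of unity are $\pm1$, and it contains the \emph{real} quadratic subfield $\Q(\sqrt{2})$. Such fields satisfy the hypotheses of the theorem, and the fundamental unit $\eta$ (hence every power $\eta^m$) may lie in that subfield, so you cannot guarantee $[\Q(\eta^m):\Q]=4$, which is exactly the point your parenthetical was meant to secure. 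The gap is repairable inside your own framework: if $\eta^m$ has degree $e$ over $\Q$ with minimal monic polynomial $f_m$, then the product over all embeddings $\sigma$ of $K$ of $(1-\sigma(\eta^m))$ equals $N_{K/\Q}(1-\eta^m)=f_m(1)^{[K:\Q(\eta^m)]}$, so your resultant-mod-$2$ computation still controls the parity of $f_m(1)$ whatever the degree of $\eta^m$ is. Better yet, your closing remark already contains the cleanest complete argument and makes the whole Newton-identity/resultant bookkeeping unnecessary: evenness of $f(1)=\pm N(1-\eta)$ (raised to a power if $\eta$ generates a subfield) gives a prime $\mathcal{P}$ of $K$ above $2$ with $\eta\equiv 1\pmod{\mathcal{P}}$; then every unit $\pm\eta^m\equiv 1\pmod{\mathcal{P}}$, so a vanishing sum of $k$ units forces $k\in\mathcal{P}\cap\Z=2\Z$, which yields $\od(K)=\infty$, and applying the same congruence to $N_{K/\Q}(1-\nu)$ shows every unit $\nu$ is even.

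For comparison, the paper takes a different and shorter route (Lemma 3.3): any relation $\varepsilon_1+\dots+\varepsilon_k=0$ is turned, after multiplying by a suitable power of the fundamental unit $\varepsilon$ (rank one, torsion $\pm1$), into $h(\varepsilon)=0$ with $h\in\Z[x]$ primitive; Gauss's lemma gives $h=fg$ with $f$ the minimal polynomial of $\varepsilon$, and then $k\equiv L(h)\equiv h(1)=f(1)g(1)\equiv 0\pmod 2$. This single parity computation shows at once that every vanishing sum of units has evenly many terms, hence $\od(K)=\infty$, and (applied to the relation $F(\nu)=0$ for an arbitrary unit $\nu$) that every unit is even. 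Your approach, once the quartic claim is replaced by one of the fixes above, is a valid alternative; its congruence-modulo-$\mathcal{P}$ version is essentially as short as the paper's argument, while the power-by-power induction via resultants is considerably more laborious for the same conclusion.
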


Our final result in this section shows that in general, cyclotomic fields contain odd units.

\begin{theorem}
\label{newthm4}
In every cyclotomic field $K=\Q(\zeta_n)$ except $n\mid 4$ there exists an odd unit. In particular, in these number fields we have $\od(K)<\infty$ and $\ev(K)<\infty$.
\end{theorem}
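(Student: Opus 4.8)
The plan is to exhibit, in each cyclotomic field $K=\Q(\zeta_n)$ with $n\nmid 4$, an explicit unit whose minimal monic polynomial has odd length, i.e. a unit $\varepsilon$ with $f(1)$ odd, and then invoke the observation preceding Theorem \ref{newcor1} to conclude $\od(K)<\infty$ and (by the trivial even representation) $\ev(K)<\infty$. The natural candidates are cyclotomic units, in particular the numbers of the form $\varepsilon=1+\zeta_n+\dots+\zeta_n^{j-1}=(1-\zeta_n^j)/(1-\zeta_n)$ for suitable $j$ coprime to $n$; such $\varepsilon$ is a unit whenever $n$ is not a prime power, and more generally one can work with ratios $(1-\zeta_n^a)/(1-\zeta_n^b)$. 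The first step is therefore to split into cases according to the shape of $n$: (i) $n$ with at least two distinct prime factors, (ii) $n$ an odd prime power, (iii) $n$ a power of $2$ with $n\geq 8$, together with the small cases $n=3,6$ (where $\zeta_n$ itself or $1-\zeta_n$ works, as $\Q(\zeta_3)$ contains an exceptional unit, giving $\od=3$ which is odd — wait, $3$ is odd so length-$3$ relation is exactly an odd unit relation).

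Next I would compute, in each case, the minimal polynomial of the chosen unit over $\Q$ and evaluate it at $1$, reducing mod $2$. A cleaner route avoiding explicit minimal polynomials: if $\varepsilon\in\mathcal{O}_K$ has minimal monic polynomial $f$ of degree $m$, then $f(1)=\prod_{\sigma}(1-\sigma\varepsilon)=N_{\Q(\varepsilon)/\Q}(1-\varepsilon)$ up to sign, so $\varepsilon$ is odd if and only if $N_{\Q(\varepsilon)/\Q}(1-\varepsilon)$ is odd, i.e. if and only if $1-\varepsilon$ is coprime to $2$ in $\Q(\varepsilon)$. Thus the task becomes: find a unit $\varepsilon$ of $K$ (which may generate a proper subfield, but one can always pass to a primitive element by a further trick, or simply pick $\varepsilon$ generating $K$) such that $1-\varepsilon$ is prime to $2$. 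For $\varepsilon=(1-\zeta_n^j)/(1-\zeta_n)$ one has $1-\varepsilon=(\zeta_n-\zeta_n^j)/(\zeta_n-1)=\zeta_n(1-\zeta_n^{j-1})/(1-\zeta_n)\cdot(-1)$, again a ratio of the same type up to a root of unity; choosing $j-1$ and $j$ both prime to $n$ when $n$ has two prime factors makes this a unit times a unit, so $N(1-\varepsilon)=\pm1$, which is odd — this handles case (i) in one stroke, provided we also ensure $\varepsilon$ generates all of $K$ (or replace $\varepsilon$ by $\varepsilon+\zeta_n$ or a similar primitive combination, checking the length parity is preserved or recomputed).

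For the prime-power cases the unit group of cyclotomic units is genuinely nontrivial and $N(1-\varepsilon)$ can be even, so more care is needed. For $n=p^k$ an odd prime power, I would use a unit like $\varepsilon=(1-\zeta_n^a)/(1-\zeta_n)$ with $a$ a primitive root mod $p$ (this is a standard cyclotomic unit, a true unit since the two arguments generate the same prime above $p$), compute $N_{K/\Q}(1-\varepsilon)$, and show its $2$-adic valuation is $0$ by a congruence argument mod $2$ — this is where the main obstacle lies, since the parity of a norm of $1-\varepsilon$ for a cyclotomic unit is not automatically controlled and may depend on $p\bmod$ something; one may need to try a small family of such units and argue at least one is odd, or use the relation $f(1)\equiv f(0)+\text{(sum of coefficients)}\pmod 2$ together with $|f(0)|=1$. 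For $n=2^k$ with $k\geq 3$, the maximal real subfield $\Q(\zeta_n)^+$ has the unit $2\cos(2\pi/n)=\zeta_n+\zeta_n^{-1}$, whose minimal polynomial I can write down recursively (these are essentially the minimal polynomials of $2\cos(2\pi/2^k)$, with known length parity), and check oddness directly. Finally I would verify the excluded cases: $n=1,2$ give $K=\Q$ (no non-root-of-unity units), $n=4$ gives $K=\Q(i)$ (units $\pm1,\pm i$, all with minimal polynomial $x\mp1$ of length $2$ or $x^2+1$ of length $2$, hence even), confirming the hypothesis $n\nmid 4$ is sharp. The one step I expect to fight with is the norm-parity computation for odd prime powers; the fallback is to enumerate $\varepsilon=(1-\zeta_n^a)/(1-\zeta_n^b)$ over several coprime pairs $(a,b)$ and show the product of all the relevant $1-\varepsilon$ cannot all be even, which forces at least one odd unit to exist.
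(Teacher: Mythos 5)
The serious gap is your case (iii), $n=2^k$ with $k\geq 3$, and it is not repairable. First, your candidate $\zeta_n+\zeta_n^{-1}$ is not a unit there: for $n=8$ it is $\sqrt{2}$ (minimal polynomial $x^2-2$), and in general its norm is divisible by $2$, so the plan fails before any parity question arises. Worse, no other choice can save this case: in $\Q(\zeta_{2^k})$ the prime $2$ is totally ramified with residue field ${\mathbb F}_2$, so every unit $\varepsilon$ satisfies $\varepsilon\equiv 1\pmod{(1-\zeta_{2^k})}$; hence $1-\varepsilon$ lies in the prime above $2$, the rational integer $N_{K/\Q}(1-\varepsilon)=\pm f(1)^{[K:\Q(\varepsilon)]}$ is even, and so $f(1)$ is even for every unit $\varepsilon\neq 1$. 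By your own criterion ($\varepsilon$ odd if and only if $f(1)$ is odd) there is no odd unit in any $2$-power cyclotomic field, and the same congruence shows directly that no odd number of units can sum to zero there, i.e. $\od(\Q(\zeta_{2^k}))=\infty$. (For comparison: the paper's own proof of this case takes $\varepsilon=1+\zeta_8+\zeta_8^2$ and asserts its minimal monic polynomial is $x^4+14x^3+5x^2+2x+1$; the correct minimal polynomial is $x^4-4x^3+8x^2-4x+1$, of even length, consistent with the obstruction just described. So the statement really needs all powers of $2$ excluded, not only $n\mid 4$, and your difficulty with case (iii) reflects a genuine defect of the theorem as stated rather than a fixable omission on your side.)

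For the remaining cases your route can be made to work, but it is needlessly heavy and leaves loose ends. In case (i) you require $j$ and $j-1$ both coprime to $n$, which is impossible for even $n$ (two consecutive integers), so the ``one stroke'' needs patching; also the worry about $\varepsilon$ generating $K$ is superfluous, since an odd unit lying in a subfield already produces a vanishing sum with an odd number of unit terms in $K$. The odd prime-power case, which you explicitly leave open, in fact succumbs to exactly the telescoping identity you used in case (i): for $n=p^k$ and $a\not\equiv 0,1\pmod p$ (your primitive root qualifies), $1-\frac{1-\zeta_n^a}{1-\zeta_n}=-\zeta_n\cdot\frac{\zeta_n^{a-1}-1}{\zeta_n-1}$ is again a unit, so $N(1-\varepsilon)=\pm 1$ and no norm-parity analysis is needed. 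But the paper's argument for every $n$ that is not a power of $2$ is a one-liner you missed: take $\varepsilon=\zeta_n$ itself; its minimal polynomial is $\Phi_n$, and $\Phi_n(1)$ equals $p$ if $n$ is a power of an odd prime $p$ and equals $1$ if $n$ has at least two distinct prime factors, hence is odd in all these cases. This single observation replaces your cases (i), (ii) and the small values $n=3,6$ entirely.
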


By our previous remarks, for $n=1,2,4$ we have $\ell(K)=\infty$.

\section{Lemmas, auxiliary results and proofs of our main results}

We start with the proofs of our theorems concerning odd and even units. For this, we need several lemmas.

\begin{lemma}
\label{lemnewsc}
Let $F$ be the minimal monic polynomial of a unit $\varepsilon$ over $\Q$ and $n$ a positive integer. Then $\varepsilon^n$ is even if $\prod_{i=0}^{n-1} F(\zeta_n^i)$ is even, where $\zeta_n$ is a primitive root of unity of order $n$.
\end{lemma}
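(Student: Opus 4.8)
\noindent
{\bf Proof strategy.} Write $m=\deg F$ and let $\varepsilon=\varepsilon_1,\dots,\varepsilon_m$ be the conjugates of $\varepsilon$, so that $F(x)=\prod_{j=1}^m(x-\varepsilon_j)\in\Z[x]$; each $\varepsilon_j$ is a unit and an algebraic integer. Let $g$ be the minimal monic polynomial of the unit $\varepsilon^n$. By the observation recorded when defining even algebraic integers, namely that an algebraic integer is even precisely when its minimal monic polynomial takes an even value at $1$, it suffices to show that if $\prod_{i=0}^{n-1}F(\zeta_n^i)$ is even, then $g(1)$ is even.

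The plan is to evaluate $\prod_{i=0}^{n-1}F(\zeta_n^i)$ in closed form. First I would interchange the two products and invoke $x^n-1=\prod_{i=0}^{n-1}(x-\zeta_n^i)$ to obtain, for each $j$,
$$
\prod_{i=0}^{n-1}(\zeta_n^i-\varepsilon_j)=(-1)^n\prod_{i=0}^{n-1}(\varepsilon_j-\zeta_n^i)=(-1)^n\bigl(\varepsilon_j^n-1\bigr),
$$
whence
$$
\prod_{i=0}^{n-1}F(\zeta_n^i)=\prod_{j=1}^m\prod_{i=0}^{n-1}(\zeta_n^i-\varepsilon_j)=\pm\prod_{j=1}^m\bigl(1-\varepsilon_j^n\bigr).
$$
Up to sign this is the resultant of $F(x)$ and $x^n-1$, hence a rational integer, which also makes the parity statement meaningful.

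Next I would relate $\prod_{j=1}^m(1-\varepsilon_j^n)$ to $g(1)$. The polynomial $\prod_{j=1}^m(x-\varepsilon_j^n)$ has integer coefficients, being symmetric in the conjugates $\varepsilon_j$; and since the Galois group of a normal closure of $\Q(\varepsilon)$ permutes the $\varepsilon_j$ transitively, it permutes the $\varepsilon_j^n$ transitively as well, so each conjugate of $\varepsilon^n$ occurs among $\varepsilon_1^n,\dots,\varepsilon_m^n$ with one and the same multiplicity $e=m/\deg g$. Equivalently, by transitivity of the norm,
$$
\prod_{j=1}^m\bigl(1-\varepsilon_j^n\bigr)=N_{\Q(\varepsilon)/\Q}\bigl(1-\varepsilon^n\bigr)=N_{\Q(\varepsilon^n)/\Q}\bigl(1-\varepsilon^n\bigr)^{e}=g(1)^{e},\qquad e=[\Q(\varepsilon):\Q(\varepsilon^n)]\ge 1.
$$
Thus $\prod_{i=0}^{n-1}F(\zeta_n^i)=\pm g(1)^{e}$, which is even if and only if $g(1)$ is even. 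In particular, if the left-hand side is even, then $g(1)$ is even, i.e.\ $\varepsilon^n$ is even, as required.

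The first step is a routine resultant computation and I would not dwell on it. The one point deserving care is the second step, i.e.\ the fact that $\prod_{j=1}^m(x-\varepsilon_j^n)$ is a power of the minimal polynomial $g$ of $\varepsilon^n$ (equivalently, the norm transitivity); this is precisely what keeps the argument correct when $\Q(\varepsilon^n)$ is a proper subfield of $\Q(\varepsilon)$, or when $\varepsilon$ is a root of unity. Since only the parity of $g(1)$ enters, the exponent $e$ plays no further role once it is known to be a positive integer.
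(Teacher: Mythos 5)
Your proof is correct, and it reaches the conclusion by a different (and in fact slightly stronger) route than the paper. The paper argues through polynomial divisibility: writing $G$ for the minimal monic polynomial of $\varepsilon^n$, the relation $G(\varepsilon^n)=0$ gives $F(x)\mid G(x^n)$, hence $F(\zeta_n^i x)\mid G(x^n)$ for each $i$ and $\prod_{i=0}^{n-1}F(\zeta_n^i x)\mid G(x^n)^n$; evaluating at $x=1$ yields that $\prod_{i=0}^{n-1}F(\zeta_n^i)$ divides $G(1)^n$ in $\Z$, so evenness of the product forces $G(1)$, and hence $\varepsilon^n$, to be even. You instead compute the product exactly: recognizing it (up to sign) as the resultant of $F(x)$ and $x^n-1$, you factor $F$ over $\C$ and use transitivity of the norm to get $\prod_{i=0}^{n-1}F(\zeta_n^i)=\pm g(1)^e$ with $e=[\Q(\varepsilon):\Q(\varepsilon^n)]\geq 1$. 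Both arguments hinge on relating the same two quantities, but yours produces an identity rather than a divisibility, and hence the stronger conclusion that $\varepsilon^n$ is even \emph{if and only if} $\prod_{i=0}^{n-1}F(\zeta_n^i)$ is even; the paper's argument gives only the implication actually needed and sidesteps any discussion of the multiplicity $e$, i.e.\ of the possible collapse $\Q(\varepsilon^n)\subsetneq\Q(\varepsilon)$, which you rightly flagged as the one delicate point in your approach. Your treatment of that point, via the transitive Galois action on the conjugates $\varepsilon_j$ (equivalently, norm transitivity, so that $\prod_{j=1}^m(x-\varepsilon_j^n)=g(x)^e$), is correct, so there is no gap.
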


\begin{proof} Let $G$ be the minimal monic polynomial of $\varepsilon^n$ over $\Q$. Since $G(\varepsilon^n)=0$, we have $F(x)\mid G(x^n)$. It follows that for every $i=0,\dots,n-1$ we have $F(\zeta_n^ix)\mid G(x^n)$ whence $\prod_{i=0}^{n-1} F(\zeta_n^ix)\mid G(x^n)^n$. If the assumption of the lemma holds, then $G(1)$ is even, thus $\varepsilon^n$ is even.
\end{proof}

Let $\phi$ be the canonical map of $\Z[x]$ onto ${\mathbb F}_2[x]$.

\begin{lemma}
\label{newthm1}
Let $\varepsilon$ be a unit of a number field $K$ and
$F$ its minimal polynomial over $\Q$. Then $\varepsilon^{2^n-1}$ is even, if $n$ is the degree of an irreducible factor over ${\mathbb F}_2$ of $\phi(F)$.
\end{lemma}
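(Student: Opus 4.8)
The plan is to combine Lemma \ref{lemnewsc} with the arithmetic of finite fields. Lemma \ref{lemnewsc} tells us that $\varepsilon^m$ is even whenever $\prod_{i=0}^{m-1} F(\zeta_m^i)$ is even, i.e. whenever $\phi\bigl(\prod_{i=0}^{m-1} F(\zeta_m^i)\bigr)=0$ in $\mathbb{F}_2$. The natural choice is $m=2^n-1$, because then $\mathbb{F}_{2^n}^\times$ is cyclic of order $m$ and contains a primitive $m$-th root of unity, so the reduction mod $2$ of $\prod_{i=0}^{m-1}F(\zeta_m^i)$ can be interpreted inside $\mathbb{F}_{2^n}$ rather than over $\mathbb{Z}$.

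First I would observe that $\prod_{i=0}^{m-1}(x-\zeta_m^i)=x^m-1$, so formally $\prod_{i=0}^{m-1}F(\zeta_m^i)$ equals the resultant (up to sign and leading coefficients, which are $1$ since everything is monic) of $F(x)$ and $x^m-1$; in particular it lies in $\mathbb{Z}$. Reducing mod $2$, this resultant equals $\prod_{\beta}\phi(F)(\beta)$ where $\beta$ runs over the roots of $x^m-1$ in $\overline{\mathbb{F}_2}$, i.e. over all of $\mathbb{F}_{2^n}^\times$ (since $m=2^n-1$, the polynomial $x^m-1$ is separable over $\mathbb{F}_2$ and its roots are exactly the nonzero elements of $\mathbb{F}_{2^n}$). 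Next I would use the hypothesis: $\phi(F)$ has an irreducible factor $h\in\mathbb{F}_2[x]$ of degree $n$. Such an $h$ has a root $\gamma\in\mathbb{F}_{2^n}$, and $\gamma\neq 0$ since $h$ is irreducible of positive degree (the only way $0$ could be a root is $h=x$, but then $n=1$ and $\gamma=0\notin\mathbb{F}_2^\times$; this degenerate case I would handle separately or note that $x\nmid F$ as $F$ is the minimal polynomial of a unit, so $x\nmid\phi(F)$). Therefore $\gamma$ appears among the $\beta$'s in the product, and the corresponding factor $\phi(F)(\gamma)=0$. Hence the whole product vanishes in $\mathbb{F}_{2^n}$, so the integer $\prod_{i=0}^{m-1}F(\zeta_m^i)$ is even, and Lemma \ref{lemnewsc} gives that $\varepsilon^{2^n-1}$ is even.

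The main obstacle is the bookkeeping connecting the product $\prod_{i=0}^{m-1}F(\zeta_m^i)$ taken over complex roots of unity with the product $\prod_{\beta\in\mathbb{F}_{2^n}^\times}\phi(F)(\beta)$ taken in characteristic $2$: one must phrase this cleanly via resultants (or via the identity $\prod_{i}F(\zeta_m^i x)=\mathrm{Res}_y(y^m-x^m\cdot(\text{stuff}),F(y))$ type manipulations) so that reduction mod $2$ commutes with the product. Once that is set up, the finite-field input — every element of $\mathbb{F}_{2^n}^\times$ is a power of a primitive $(2^n-1)$-th root of unity, and $h\mid\phi(F)$ supplies a root of $\phi(F)$ in $\mathbb{F}_{2^n}$ — finishes the argument immediately. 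A minor point to get right is that $F$ need not itself be irreducible mod $2$, and $\varepsilon$ might be a root of unity; but if $\varepsilon=\pm1$ the statement is trivial ($1$ and $-1$ are odd, $L=2$... actually one should just note the interesting content is for non-torsion $\varepsilon$), and in all cases $x\nmid\phi(F)$, which is all that is needed above.
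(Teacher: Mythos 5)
Your argument is correct, and it reaches the key divisibility through a different mechanism than the paper. Both proofs follow the same skeleton: apply Lemma \ref{lemnewsc} with exponent $m=2^n-1$, so everything reduces to showing that the rational integer $\prod_{i=0}^{m-1}F(\zeta_m^i)$ is even. The paper verifies this by working in the cyclotomic field $\Q(\zeta_{2^n-1})$: it factors $\Phi_{2^n-1}$ modulo $2$ into the irreducible polynomials of degree $n$ over ${\mathbb F}_2$, invokes Dedekind's theorem to obtain the prime ideals ${\mathcal P}_{f,n}=(G_f(\zeta_{2^n-1}),2)$ above $2$, and deduces from $f\mid \phi(F)$ that $F(\zeta_{2^n-1})\in{\mathcal P}_{f,n}$, so the norm (and hence the full product) is divisible by $N({\mathcal P}_{f,n})$, a power of $2$. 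You instead identify the product with $\mathrm{Res}(x^m-1,F)$, use that reduction modulo $2$ commutes with resultants of monic polynomials, and evaluate the reduced resultant as $\prod_{\beta\in{\mathbb F}_{2^n}^\times}\phi(F)(\beta)$, which vanishes because the degree-$n$ irreducible factor of $\phi(F)$ supplies a root in ${\mathbb F}_{2^n}^\times$ (nonzero since the constant term of $F$ is $\pm1$). Your route is more elementary --- it avoids Dedekind's theorem and ideal norms entirely, replacing them with the Sylvester-determinant compatibility of resultants with reduction mod $2$ --- while the paper's route makes the role of the primes above $2$ explicit; the finite-field input (every element of ${\mathbb F}_{2^n}^\times$ is a $(2^n-1)$-st root of unity) is the same in both. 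One small correction to your closing aside: $\pm1$ are \emph{even} units ($L(x\mp1)=2$), not odd, but no separate treatment of roots of unity is needed anyway, since your main argument only uses that $F(0)=\pm1$, which holds for every unit.
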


\begin{proof} By the theory of finite fields we have
$$
x^{2^n}-x=\prod f(x),
$$
where the product on the right hand side is taken over all distinct irreducible polynomials over ${\mathbb F}_2$ whose degree divides $n$. Hence, over ${\mathbb F}_2$ we have
$$
\phi(\Phi_{2^n-1}(x))=\prod f(x),
$$
where $\Phi_{m}$ denotes the cyclotomic polynomial of order $m$, and the product on the right hand side is now taken over all distinct irreducible polynomials over ${\mathbb F}_2$ of degree $n$. By Dedekind's theorem on congruences we have
$$
(2)=\prod(G_f(\zeta_{2^n-1}),2)\ \ \ \text{with}\ G_f\in\Z[x],\ \phi(G_f)=f,
$$
where the product on the right hand side is taken as before, and the ideals are prime. Write ${\mathcal P}_{f,n}=(G_f(\zeta_{2^n-1}),2)$. It follows that if $f\mid F$  over ${\mathbb F}_2$ with deg$(f)=n$, then we have
$$
\prod\limits_{j=1}^{2^n-1} F(\zeta_{2^n-1}^j)\equiv N_{\Q(\zeta_{2^n-1})/\Q}(F(\zeta_{2^n-1}))\equiv 0\pmod{N_{\Q(\zeta_{2^n-1})/\Q}({\mathcal P}_{f,n})},
$$
whence the congruence also holds modulo $2$. This shows, by Lemma \ref{lemnewsc}, that $\varepsilon^{2^n-1}$ is even, and the statement follows.
\end{proof}

\begin{proof}[Proof of Theorem \ref{newcor1}] The statement is an immediate consequence of Lemma \ref{newthm1}.
\end{proof}

To prove Theorem \ref{newcor2}, we need the following

\begin{lemma}
\label{newthm2}
Let $K$ be a real quadratic, a complex cubic or a totally complex quartic field; in the latter case assume that $K$ does not contain roots of unity different from $\pm 1$. Let $\varepsilon$ be a fundamental unit of $K$. Suppose that $\varepsilon$ is even. Then if $\varepsilon_1+\dots+\varepsilon_k=0$ holds for some units $\varepsilon_1,\dots,\varepsilon_k$ of $K$ then $k$ is even.
\end{lemma}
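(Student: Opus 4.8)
The plan is to use that each of the three kinds of field has unit rank one. By Dirichlet's unit theorem this is the case for real quadratic, complex cubic and totally complex quartic fields; moreover in the first two cases $K$ has a real embedding, so its only roots of unity are $\pm 1$, and in the quartic case this is assumed in the hypothesis. Hence, fixing the fundamental unit $\varepsilon$, every unit of $K$ is of the form $\pm\varepsilon^{n}$ with $n\in\Z$, and I would write $\varepsilon_i=\delta_i\varepsilon^{n_i}$ with $\delta_i\in\{\pm 1\}$ and $n_i\in\Z$ for $i=1,\dots,k$.

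The key step is to translate the evenness of $\varepsilon$ into a congruence modulo a prime above $2$. Recall that $\varepsilon$ is even precisely when $f(1)$ is even, where $f$ is the minimal monic polynomial of $\varepsilon$ over $\Q$; and for monic $f$ one has $f(1)=\prod_i(1-\varepsilon^{(i)})=N_{\Q(\varepsilon)/\Q}(1-\varepsilon)$. Thus $2\mid N_{\Q(\varepsilon)/\Q}(1-\varepsilon)$, so some prime ideal above $2$ divides $1-\varepsilon$ in $\mathcal O_{\Q(\varepsilon)}$, and consequently some prime ideal $\mathfrak p$ of $\mathcal O_K$ lying above $2$ satisfies $\varepsilon\equiv 1\pmod{\mathfrak p}$. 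In the quadratic and cubic cases one has $\Q(\varepsilon)=K$ automatically; in the quartic case $\Q(\varepsilon)$ may be a proper (necessarily real quadratic) subfield, but then a prime above $2$ in that subfield still lies below a prime above $2$ in $\mathcal O_K$, so nothing changes.

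Finally I would reduce $\varepsilon_1+\dots+\varepsilon_k=0$ modulo $\mathfrak p$. Since $\varepsilon\equiv 1\pmod{\mathfrak p}$, every $\varepsilon^{n_i}\equiv 1$, so $\delta_1+\dots+\delta_k\equiv 0\pmod{\mathfrak p}$. As $\delta_1+\dots+\delta_k$ is a rational integer and $\mathfrak p\cap\Z=2\Z$ (because $\mathfrak p$ lies above $2$), this sum is even; since each $\delta_i$ is odd it is also congruent to $k$ modulo $2$, and therefore $k$ is even, as claimed.

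The argument is short, and its only real content is the second step: recognising that ``$\varepsilon$ even'' is exactly the assertion that $\varepsilon\equiv 1$ modulo some prime above $2$, which forces every power occurring in the vanishing sum to collapse to $1$. The one technical wrinkle, easily dealt with as indicated above, is the totally complex quartic case, where the minimal polynomial of a fundamental unit may have degree $2$ rather than $4$, so that evenness refers a priori to a norm taken from a subfield rather than from $K$ itself.
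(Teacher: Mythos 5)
Your proof is correct, but it follows a genuinely different route from the paper. The paper, after reducing (exactly as you do) to the fact that every unit is $\pm\varepsilon^{n}$, multiplies the vanishing sum by a suitable power of $\varepsilon$ to obtain a polynomial relation $h(\varepsilon)=0$ with $h\in\Z[x]$, makes $h$ primitive, invokes the Gauss lemma to write $h=fg$ with $g\in\Z[x]$, and derives a contradiction from the parities of the lengths $L(h)$ (odd) and $L(f)$ (even); this keeps the argument inside the length-of-polynomials machinery that the paper reuses elsewhere (Lemma \ref{lemlfg}, Theorems \ref{thm2} and \ref{thm3}) to compute $\ell(K)$ exactly. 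You instead convert evenness of $\varepsilon$, via $f(1)=N_{\Q(\varepsilon)/\Q}(1-\varepsilon)$, into the congruence $\varepsilon\equiv 1\pmod{\mathfrak p}$ for some prime $\mathfrak p$ of $O_K$ above $2$, and then reduce the whole unit sum modulo $\mathfrak p$ to get $k\equiv\delta_1+\dots+\delta_k\equiv 0\pmod 2$. This is essentially the technique the paper itself deploys later for the totally real cubic case (Proposition \ref{newthm3}), and it has some advantages: it avoids the content/primitivity step and any worry about terms of the sum combining into a single coefficient of $h$, and it makes transparent why the conclusion holds (all units collapse to $\pm 1$ modulo $\mathfrak p$). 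Your handling of the two technical points --- that the unit rank is one and the roots of unity are $\pm 1$ in all three cases, and that in the quartic case $\Q(\varepsilon)$ might be a proper real quadratic subfield, in which case one lifts the prime above $2$ to $O_K$ --- is correct, so the argument stands as written.
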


\begin{proof} Suppose to the contrary that with some odd $k$, we have an equality of the form $\varepsilon_1+\dots+\varepsilon_k=0$. Let $f(x)$ be the minimal monic polynomial of $\varepsilon$ over $\Z$. By multiplying the equation by an appropriate power of $\varepsilon$ (in view of that the unit rank of $K$ is one and $K$ contains no roots of unity different from $\pm 1$) we get an equation of the form $h(\varepsilon)=0$, where $h\in\Z[x]$. Dividing $h$ by an appropriate integer if necessary, we may further assume that it is primitive. Then, by the Gauss lemma we easily deduce that $h(x)=f(x)g(x)$ holds, where $g(x)\in\Z[x]$. However, since $L(h)$ is odd and $L(f)$ is even, it yields a contradiction. Hence the lemma follows.
\end{proof}

\begin{proof}[Proof of Theorem \ref{newcor2}] The statement is an immediate consequence of Lemma \ref{newthm2}.
\end{proof}

Since we find it of independent interest, now we show that a statement similar to Lemma \ref{newthm2} is true for totally real cubic fields (having already unit rank $2$).

\begin{prop}
\label{newthm3}
Let $K$ be a totally real cubic field. Suppose that $K$ has a system of fundamental units which are even. Then all units of $K$ are even.
\end{prop}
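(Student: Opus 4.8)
The plan is to recast evenness of a unit as a congruence condition modulo the prime ideals of $K$ lying above $2$, and then to use the fact that in a cubic field the rational prime $2$ can decompose only in very restricted ways.

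First I would record a convenient criterion for evenness. For any $g\in\Z[x]$ one has $L(g)\equiv g(1)\pmod 2$, since $|b|\equiv b\pmod 2$ for every $b\in\Z$; hence, as already noted in the text, a unit $\eta$ of $K$ is even exactly when $f(1)$ is even, where $f$ is the minimal monic polynomial of $\eta$. Writing $m=[K:\Q(\eta)]$, the characteristic polynomial of $\eta$ over $\Q$ equals $f(x)^m$, so $N_{K/\Q}(1-\eta)=f(1)^m$, and therefore $\eta$ is even if and only if $2\mid N_{K/\Q}(1-\eta)$ (for $\eta=1$ both sides vanish, which is consistent with $x-1$ having even length). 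Comparing this with the ideal factorization $N_{K/\Q}((1-\eta))=\prod_{\mathfrak q}N(\mathfrak q)^{v_{\mathfrak q}(1-\eta)}$, and noting that exactly the primes above $2$ contribute even norms, one obtains: a unit $\eta$ of $K$ is even if and only if $\eta\equiv 1\pmod{\mathfrak p}$ for at least one prime $\mathfrak p$ of $K$ dividing $2$.

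Next I would analyse the splitting of $2$. Let $\mathfrak p_1,\dots,\mathfrak p_g$ be the primes above $2$, with ramification indices $e_i$ and residue degrees $f_i$, so $\sum_{i=1}^g e_if_i=3$. If some $f_i=1$, then $\mathcal O_K/\mathfrak p_i\cong\mathbb F_2$, so every unit of $K$ reduces to the unique nonzero residue modulo $\mathfrak p_i$; by the criterion above every unit of $K$ is then even, and the hypothesis is not even needed. Otherwise $f_i\geq 2$ for all $i$, whence $3=\sum e_if_i\geq 2g$ forces $g=1$, and then $e_1f_1=3$ with $f_1\geq 2$ yields $f_1=3$, $e_1=1$: the prime $2$ is inert and $\mathcal O_K/2\mathcal O_K\cong\mathbb F_8$. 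Reduction modulo $2$ is now a group homomorphism $\rho\colon\mathcal O_K^\times\to\mathbb F_8^{\times}$, and by the criterion a unit $\eta$ is even precisely when $\rho(\eta)=1$. Since $K$ is totally real cubic, $\mathcal O_K^\times=\langle-1\rangle\times\langle\varepsilon_1\rangle\times\langle\varepsilon_2\rangle$ for a fundamental pair $\varepsilon_1,\varepsilon_2$; the hypothesis gives $\rho(\varepsilon_1)=\rho(\varepsilon_2)=1$, and $\rho(-1)=1$ trivially, so $\rho$ is the trivial homomorphism and every unit of $K$ is even.

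The point to watch is that evenness is genuinely not multiplicative: a product of two even units can fail to be even, because for different units the prime $\mathfrak p$ witnessing $\eta\equiv 1\pmod{\mathfrak p}$ may be different. What makes the proposition work is precisely the constraint $\sum e_if_i=3$: in a cubic field it leaves only the two scenarios above, and in both of them all the relevant congruences can be taken at a single prime, which is exactly what the homomorphism argument needs. Getting the reformulation in the first two steps exactly right — in particular the harmless boundary case $\eta=1$ — is the only other thing requiring care; the rest is routine bookkeeping.
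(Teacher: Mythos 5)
Your proof is correct, but it runs along a genuinely different track than the paper's. You first convert evenness into an ideal-theoretic criterion: via $L(f)\equiv f(1)\pmod 2$ and $N_{K/\Q}(1-\eta)=f(1)^{[K:\Q(\eta)]}$, a unit $\eta$ is even exactly when $\eta\equiv 1$ modulo at least one prime ideal of $K$ above $2$ (with the case $\eta=1$ checked separately). From the constraint $\sum e_if_i=3$ you then get a clean dichotomy: if some prime above $2$ has residue degree one, every unit of $K$ is even and the hypothesis on the fundamental system is not even needed; otherwise $2$ is inert, reduction modulo $2$ is a homomorphism of $O_K^*$ into ${\mathbb F}_8^\times$ whose kernel is exactly the even units, and since $O_K^*=\langle -1\rangle\times\langle\varepsilon_1\rangle\times\langle\varepsilon_2\rangle$ (totally real, so torsion is $\pm 1$) the hypothesis forces the whole unit group into the kernel. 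The paper instead argues directly with the mod $2$ shapes of the minimal polynomials of the fundamental units ($x^3+1$ or $x^3+x^2+x+1$), runs through the five possible splitting types of $(2)$, extracts $\varepsilon\equiv 1\pmod{{\mathcal P}_1}$ from $(\varepsilon+1)^3\equiv 0\pmod 2$, and rules out a hypothetical odd unit $\pm\varepsilon^m\eta^n$ by showing the resulting coefficient congruences $a+b\equiv 1\pmod 2$ and $a(\varepsilon^m\eta^n)^2+b(\varepsilon^m\eta^n)\equiv 0$ are incompatible. Both proofs ultimately rest on congruences at primes above $2$, but your reformulation isolates the mechanism more conceptually, explains precisely why evenness (not multiplicative in general) is multiplicative here, and yields the extra information that the hypothesis is only ever needed when $2$ is inert; the paper's computation, on the other hand, stays elementary and parallel to its Lemma 3.3 for the rank-one fields.
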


\begin{proof}
Write $\varepsilon,\eta$ for a system of fundamental units of $K$. Since $\varepsilon$ is even, either $\varepsilon^3+\varepsilon^2+\varepsilon+1\equiv 0\pmod{2}$, or $\varepsilon^3+1\equiv 0\pmod{2}$, and the same is valid with $\eta$ in place of $\varepsilon$. If $\varepsilon^3+1\equiv \eta^3+1\pmod{2}$ then all units of $K$ are even. Indeed, if $\nu=\pm \varepsilon^m\eta^n$ $(m,n\in\Z)$ would be an odd unit of $K$ with minimal monic polynomial $x^3+ax^2+bx\pm 1$, then we would have
$$
a+b\equiv 1\pmod{2}\ \ \ \text{and}\ \ \ a(\varepsilon^m\eta^n)^2+b(\varepsilon^m\eta^n)\equiv 0\pmod{2},
$$
which is impossible.

Thus without loss of generality we may assume that
$$
\varepsilon^3+\varepsilon^2+\varepsilon+1\equiv
0\pmod{2}.
$$
We have five cases, according to the splitting of the prime $2$ (the principal ideal $(2)$) in $K$:
\begin{itemize}
\item $(2)={\mathcal P}_1$, ${\mathcal P}_1$ is a prime ideal,
\item $(2)={\mathcal P}_1{\mathcal P}_2$, ${\mathcal P}_i$ is a prime ideal of degree $i$ $(i=1,2)$,
\item $(2)={\mathcal P}_1{\mathcal P}_2{\mathcal P}_3$, the ${\mathcal P}_i$ are distinct prime ideals $(i=1,2,3)$,
\item $(2)={\mathcal P}_1^2{\mathcal P}_2$, the ${\mathcal P}_i$ are distinct prime ideals $(i=1,2)$,
\item $(2)={\mathcal P}_1^3$, ${\mathcal P}_1$ is a prime ideal.
\end{itemize}
Observe that in all cases, by
$$
\varepsilon^3+\varepsilon^2+\varepsilon+1\equiv (\varepsilon+1)^3\equiv 0\pmod{2},
$$
we obtain $\varepsilon\equiv 1\pmod{{\mathcal P}_1}$.
If we had also $\eta\equiv 1\pmod{{\mathcal P}_1}$, then
$\pm \varepsilon^m\eta^n\equiv 1\pmod{{\mathcal P}_1}$ $(m,n\in\Z)$ would follow, showing that
\begin{equation}
\label{ujneweq}
a+b\equiv 1\pmod{2}\ \ \ \text{and}\ \ \ a(\varepsilon^m\eta^n)^2+b(\varepsilon^m\eta^n)\equiv 0\pmod{{\mathcal P}_1}
\end{equation}
is impossible. It remains to check the case
$$
\varepsilon\equiv1\pmod{{\mathcal P}_1}\ \ \ \text{and}\ \ \
\eta^3\equiv 1\pmod{2}.
$$
However, as one can easily check, \eqref{ujneweq} is also impossible in this case. Hence our statement follows.
\end{proof}

\begin{proof}[Proof of Theorem \ref{newthm4}]
If $n\neq 2^\alpha$, one can take $\varepsilon=\zeta_n$. Indeed, the minimal monic polynomial of $\zeta_n$ is $\Phi_n(x)$ and we have
$$
\Phi_n(1)\equiv 1\pmod{2}.
$$
If $n=2^\alpha$ $(\alpha\geq 3)$, one can take
$$
\varepsilon=1+\zeta_8+\zeta_8^2=\frac{\zeta_8^3-1}{\zeta_8-1}.
$$
Indeed, the minimal monic polynomial of $\varepsilon$ is $x^4+14x^3+5x^2+2x+1$, and the theorem follows.
\end{proof}

Now we turn to the proofs of our theorems concerning $\ell(K),\od(K),\ev(K)$. In fact, the proof of Theorem \ref{thm1} is based upon a simple observation.

\begin{proof}[Proof of Theorem \ref{thm1}]
Let $\varepsilon$ be a unit of $K$, different from $1$ and $-1$. Write $f(x)=x^n+a_{n-1}x^{n-1}+\dots+a_1x+a_0\in\Z[x]$ for the minimal monic polynomial of $\varepsilon$. Then the equality $f(\varepsilon)=0$ can be considered as an equation of the form \eqref{defprop}, with $k:=1+|a_{n-1}|+\dots+|a_1|+|a_0|$ terms on the left hand side. Since $f(x)$ is the minimal monic polynomial of $\varepsilon$, it is obvious that this equation has no vanishing subsums. This proves that $\ell(K)\leq k$. Since $n\leq d$ and $k\leq (d+1)H(f)$, where $H(f)$ denotes the height (i.e. the maximum absolute value of the coefficients) of $f$, it suffices to to give an upper bound for the height of the minimal monic polynomial of an appropriate unit $\varepsilon$ of $K$.

It follows from Proposition 4.3.9 in Evertse and Gy\H{o}ry \cite{egy}, an improvement of a classical result of Siegel \cite{si2}, that there is a unit $\varepsilon$ in $K$ such that $h(\varepsilon)\leq cR_K$ with the constant specified in Theorem \ref{thm1}. Here $h(\varepsilon)$ denotes the absolute logarithmic height of $\varepsilon$. But by (1.9.3) of Evertse and Gy\H{o}ry \cite{egy}, the height of the minimal monic polynomial of $\varepsilon$ is at most $2\exp\{h(\varepsilon)\}$, hence the claimed upper bound for $\ell(K)$ follows.
\end{proof}

To prove Theorem \ref{thm2}, we need two lemmas. The first one is due to Louboutin \cite{lo}.

\begin{lemma}
\label{lemlo}
Let $\varepsilon>1$ be a real quadratic unit. Then $\varepsilon$ is the fundamental unit of the quadratic order $\Z[\varepsilon]$, with the sole exception of $\varepsilon =(3+\sqrt{5})/2$.
\end{lemma}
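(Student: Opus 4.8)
The plan is to compare $\varepsilon$ with the fundamental unit of the \emph{order} $\Z[\varepsilon]$ rather than with that of the field $\Q(\varepsilon)$; this distinction is exactly what makes $(3+\sqrt{5})/2$ genuinely exceptional. First note that $\varepsilon$ is a unit of $\Z[\varepsilon]$: if its minimal polynomial is $x^2-tx+N$ with $N=\pm1$, then $\varepsilon^{-1}=N(t-\varepsilon)\in\Z[\varepsilon]$. Since $\Z[\varepsilon]$ is an order in the real quadratic field $\Q(\varepsilon)$, its unit group has the form $\{\pm1\}\times\langle\eta\rangle$ for a unique fundamental unit $\eta>1$, and since $\varepsilon>1$ is a unit of $\Z[\varepsilon]$ we have $\varepsilon=\eta^n$ for some integer $n\geq1$. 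It remains to prove $n=1$ unless $\varepsilon=(3+\sqrt{5})/2$.

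So assume $n\geq2$. Since $\eta\in\Z[\varepsilon]$ we have $\Z[\eta]\subseteq\Z[\varepsilon]$, while $\varepsilon=\eta^n\in\Z[\eta]$ gives $\Z[\varepsilon]\subseteq\Z[\eta]$; hence $\Z[\eta^n]=\Z[\eta]$. Writing $\eta^n=c+d\eta$ with $c,d\in\Z$ and $d\neq0$ (as $\eta$ is a quadratic irrational), the sublattice $\Z+\Z\eta^n$ has index $|d|$ in $\Z+\Z\eta$, so the equality $\Z[\eta^n]=\Z[\eta]$ forces $|d|=1$. Subtracting $\eta^n=c+d\eta$ from its conjugate $(\eta')^n=c+d\eta'$ yields $d=(\eta^n-(\eta')^n)/(\eta-\eta')$, hence $|\eta^n-(\eta')^n|=|\eta-\eta'|$. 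Since $|\eta'|=\eta^{-1}<1$, the right-hand side is smaller than $\eta+1$ and the left-hand side exceeds $\eta^n-1$, so $\eta^n<\eta+2$.

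Next I would invoke the elementary classification of small real quadratic units: the minimal polynomial of such a unit $>1$ can be written as $x^2-tx+N$ with $N=\pm1$, where $t\geq3$ if $N=1$ and $t\geq1$ if $N=-1$; the root exceeding $1$ is then smallest for $t=1$, $N=-1$, equal to $(1+\sqrt{5})/2$, and this is the only such root lying in the interval $(1,2)$ (the next being $1+\sqrt{2}$). In particular $\eta\geq(1+\sqrt{5})/2$. For $n\geq3$ this contradicts $\eta^n<\eta+2$, because the function $x\mapsto x^3-x-2$ is increasing on $[1,\infty)$ and is already positive at $x=(1+\sqrt{5})/2$, where it equals $(\sqrt{5}-1)/2$. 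Hence $n=2$, and then $\eta^2<\eta+2$ forces $\eta<2$, so $\eta=(1+\sqrt{5})/2$ and $\varepsilon=\eta^2=\eta+1=(3+\sqrt{5})/2$. Conversely this value truly is exceptional, since $\Z[(3+\sqrt{5})/2]=\Z[(1+\sqrt{5})/2]$ is the maximal order of $\Q(\sqrt{5})$, whose fundamental unit is $(1+\sqrt{5})/2\neq\varepsilon$.

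The argument is short, and I do not expect a serious obstacle; the two points requiring a little care are the index identity $[\Z+\Z\eta:\Z+\Z\eta^n]=|d|$ and the classification of quadratic units just below $2$, needed to secure $\eta\geq(1+\sqrt{5})/2$ and to rule out every quadratic unit in $(1,2)$ other than $(1+\sqrt{5})/2$ itself.
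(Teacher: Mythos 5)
Your proof is correct, but it takes a genuinely different route from the paper: there the lemma is disposed of in one line by citing Theorem 1 of Louboutin \cite{lo}, a general result about fundamental units of quadratic (and also cubic and quartic) orders, whereas you give a self-contained elementary argument tailored to the quadratic case. Your key steps all check out: $\varepsilon$ is a unit of the order $\Z[\varepsilon]$, hence $\varepsilon=\eta^n$ with $\eta>1$ the fundamental unit of that order; the equality $\Z[\eta^n]=\Z[\eta]$ together with $\eta^n=c+d\eta$ and the index computation $[\Z+\Z\eta:\Z+\Z\eta^n]=|d|$ forces $|d|=1$, i.e. $\eta^n-(\eta')^n=\pm(\eta-\eta')$, giving $\eta^n<\eta+2$; the classification of quadratic units just above $1$ (smallest is $(1+\sqrt{5})/2$, the only one below $2$, the next being $1+\sqrt{2}$) then kills $n\geq 3$ via $\eta^3>\eta+2$ at $\eta\geq(1+\sqrt{5})/2$, and $n=2$ forces $\eta<2$, hence $\eta=(1+\sqrt{5})/2$ and $\varepsilon=(3+\sqrt{5})/2$; finally you verify that this value really is exceptional since $\Z[(3+\sqrt{5})/2]=\Z[(1+\sqrt{5})/2]$ has fundamental unit $(1+\sqrt{5})/2$. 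What the citation buys the paper is brevity and a statement already covering higher-degree orders used elsewhere in Louboutin's work; what your argument buys is independence from the reference and transparency about why the golden-ratio square is the unique exception. One small wording point: the reason $d\neq 0$ is that $\eta^n=\varepsilon$ is a quadratic irrational (not merely that $\eta$ is), but this is exactly your hypothesis, so nothing is lost.
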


\begin{proof} The statement is an immediate consequence of Theorem 1 of \cite{lo}.
\end{proof}

The next lemma shows that in case of some quadratic and cubic polynomials $f(x)\in\Z[x]$ of special shape, $L(fg)\geq L(f)$ holds for all $g(x)\in\Z[x]$ which is not identically zero.

\begin{lemma}
\label{lemlfg}
Let $a$ be a positive integer, and $f(x)$ be one of the polynomials $x^2-ax-1$, $x^3+ax+1$, $x^3+x^2+ax+1$; in the latter case assume further that $a\geq 3$. Then for any $g(x)\in\Z[x]$ not identically zero, we have $L(fg)\geq L(f)$.
\end{lemma}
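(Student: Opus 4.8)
The plan is to fix a polynomial $f$ from the list and show, for every nonzero $g\in\Z[x]$, that $L(fg)\ge L(f)$ by working over the real place, i.e. evaluating $fg$ and its coefficients against the positive real root $\theta>1$ (or $0<\theta<1$) of $f$. The key algebraic observation is that if $h=fg$, then $h(\theta)=0$, so writing $h(x)=\sum c_i x^i$ we get $\sum c_i\theta^i=0$; separating the indices according to the sign of $c_i$ gives $\sum_{c_i>0} c_i\theta^i=\sum_{c_i<0}(-c_i)\theta^i$, and hence, after dividing by a suitable power of $\theta$, one bounds $L(h)=\sum|c_i|$ from below by comparing it with the analogous alternating identity satisfied by $f$ itself.

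First I would treat the quadratic case $f(x)=x^2-ax-1$, whose positive root is $\theta=\frac{a+\sqrt{a^2+4}}{2}>a\ge 1$, so that $\theta>1$ and $f(\theta)=0$ means $\theta^2=a\theta+1$. For a general $h=fg$ of degree $m\ge 2$, the idea is: among all indices, the constant term $c_0$ and the leading term $c_m$ are nonzero (the former because $f(0)=-1$ so $h(0)=-g(0)$ can be made to matter only up to sign, the latter as a leading coefficient), and I would show directly that $|c_0|+\dots+|c_m|\ge a+2=L(f)$. The cleanest route is to note $g(\theta)\ne 0$ is false — rather $g$ is arbitrary, so instead one argues: since $\theta>1$, the quantity $\sum_{i}|c_i|\theta^i \ge |c_m|\theta^m + \text{(lower terms)}$, and using $h(\theta)=0$ one gets $\sum_{c_i>0}c_i\theta^i = \sum_{c_i<0}|c_i|\theta^i =: S$; then $L(h)\ge 2S/\theta^m$ is too weak, so one instead exploits that $h(x)=(x^2-ax-1)g(x)$ forces, reading off coefficients, the recursion $c_i = b_{i-2}-a b_{i-1}-b_i$ (with $g=\sum b_j x^j$), and a telescoping/triangle-inequality argument over consecutive coefficients recovers the $a+2$ lower bound. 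The cubic cases $x^3+ax+1$ and $x^3+x^2+ax+1$ (with $a\ge 3$) I would handle analogously, again via the coefficient recursion coming from multiplication by $f$, the extra hypothesis $a\ge 3$ being exactly what is needed to keep the corresponding estimate from degenerating.

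The main obstacle I anticipate is organizing the coefficient bookkeeping so that the telescoping argument is uniform and does not lose a constant: one must be careful that the "boundary" coefficients (near $i=0$ and near $i=\deg h$) are handled correctly, since $g$ may have a large degree and many sign changes, and a naive triangle inequality on the recursion $c_i=b_{i-2}-ab_{i-1}-b_i$ gives $L(h)\le (a+2)L(g)$ — the wrong direction. The right approach is to evaluate cleverly: substitute $x=\theta$ (killing $h$) and also $x=-1$ or use $h(1)=f(1)g(1)$ together with the sign pattern forced by $\theta>1$, so that the positive-coefficient sum and negative-coefficient sum of $h$ are separately comparable to those of $f$. I expect the argument to reduce, in each of the three cases, to a short case analysis on $\mathrm{sign}\,g(0)$ and the leading sign of $g$, after which the inequality $L(fg)\ge L(f)$ drops out; the $a\ge3$ restriction in the last case will be seen to be sharp because for $a\le 2$ the polynomial $x^3+x^2+ax+1$ can be a factor of a shorter polynomial (e.g.\ via $\Phi_n$-type identities).
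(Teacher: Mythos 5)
There is a genuine gap: you circle around the correct tool and then explicitly discard it. The paper's proof is exactly a triangle-inequality argument on the coefficient recursion you write down, but used in the \emph{reverse} direction: from $c_i=b_{i-2}-ab_{i-1}-b_i$ one gets $|c_i|\ge a|b_{i-1}|-|b_{i-2}|-|b_i|$ (with the boundary coefficients $c_0=-b_0$, $c_{n+2}=b_n$, $c_1$, $c_{n+1}$ treated separately), and summing over $i$ gives
$$
L(fg)\ \ge\ aL(g)+2|b_0|+2|b_n|-2L(g)\ \ge\ (a-2)L(g)+4,
$$
which, after the harmless normalizations $\deg g\ge 1$, $b_0\neq 0$ (factor out powers of $x$), hence $L(g)\ge 2$, already yields $L(fg)\ge 2a\ge a+2=L(f)$ for $a\ge 2$; the analogous computation gives $(a-2)L(g)+4$ for $x^3+ax+1$ and $(a-3)L(g)+6$ for $x^3+x^2+ax+1$, which is precisely where the hypothesis $a\ge 3$ enters. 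Your statement that ``a naive triangle inequality on the recursion gives $L(h)\le(a+2)L(g)$ --- the wrong direction'' conflates the upper-bound triangle inequality with the reverse triangle inequality that isolates the dominant term $a|b_{i-1}|$; because of this you abandon the one mechanism that actually closes the proof, and the quantitative step is never supplied.

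What you fall back on --- evaluating at the real root $\theta>1$ and comparing the positive and negative coefficient sums of $fg$ with those of $f$, or substituting $x=\pm1$ --- is not developed into an inequality, and you yourself note that the crude form ($L(h)\ge 2S/\theta^m$) is too weak; nothing in the proposal repairs it, and it is not clear it can give $L(fg)\ge L(f)$ at all, since $g$ may have arbitrarily many sign changes and $|g(\theta)|$ may be tiny. You also omit the necessary treatment of the small values of $a$ in the first two cases: for $a=1$ the bound $(a-2)L(g)+4$ only gives $L(fg)\ge 2$ or $3$, and the paper finishes by observing that $L(fg)<L(f)$ would force $f(x)\mid x^{m}\pm1$, which is impossible since the roots of $x^2-x-1$ and $x^3+x+1$ are not roots of unity; your sketch has no counterpart for this step. (Your closing remark that $a\ge3$ is sharp is plausible for $a=1$, where $x^3+x^2+x+1\mid x^4-1$, but it is a side issue and does not affect the gap above.)
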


\begin{proof} Let $a$ be a positive integer, and $g(x)=b_nx^n+\dots+b_1x+b_0$ with $n\geq 0$ and $b_n,\dots,b_0\in \Z$, $b_n\neq 0$. Clearly, we may assume that $n\geq 1$, $b_n>0$ and $b_0\neq 0$, whence $L(g)\geq 2$. Further, we put $h(x)=f(x)g(x)$.

First let $f(x)=x^2-ax-1$. Then we have
$$
h(x)=c_{n+2}x^{n+2}+\dots+c_1x+c_0,
$$
with
$$
c_{n+2}=b_n,\ c_{n+1}=b_{n-1}-ab_n,\ c_1=-ab_0-b_1,\ c_0=-b_0
$$
and
$$
c_i=b_{i-2}-ab_{i-1}-b_i\ (i=2,\dots, n).
$$
Hence we get
$$
L(h)=\sum\limits_{i=0}^{n+2} |c_i|\geq |b_n|+|b_0|+aL(g)-\sum\limits_{i=0}^{n-1} |b_i|-\sum\limits_{i=1}^n |b_i|\geq
(a-2)L(g)+4.
$$
As $L(g)\geq 2$ and $L(f)=a+2$, this implies our claim for $a\geq 2$. If $a=1$ then $L(f)=3$, and we are done unless $L(h)=2$, that is, $h(x)=x^{n+2}\pm 1$. However, then $f(x)\nmid h(x)$, which is a contradiction, proving our claim in this case.

Assume now that $f(x)=x^3+ax+1$. Then we have
$$
h(x)=c_{n+3}x^{n+3}+\dots+c_1x+c_0,
$$
with
$$
c_{n+3}=b_n,\ \ \ c_{n+2}=b_{n-1},\ \ \ c_{n+1}=b_{n-2}+ab_n,
$$
$$
c_2=ab_1+b_2,\ \ \ c_1=ab_0+b_1,\ \ \ c_0=b_0
$$
and
$$
c_i=b_{i-3}+ab_{i-1}+b_i\ (i=3,\dots, n).
$$
Similarly to the case $f(x)=x^2-ax-1$, we get
$$
L(h)\geq |b_n|+|b_{n-1}|+|b_0|+aL(g)-\sum\limits_{i=0}^{n-2} |b_i|-\sum\limits_{i=1}^n |b_i|\geq
(a-2)L(g)+4.
$$
This gives that the statement is valid for $a\geq 2$. For $a=1$, $L(h)<L(f)$ would imply $x^3+x+1\mid x^{n+3}\pm 1$, which does not hold. Hence the lemma follows also in this case.

Finally, let $f(x)=x^3+x^2+ax+1$. Then we can write
$$
h(x)=c_{n+3}x^{n+3}+\dots+c_1x+c_0,
$$
with
$$
c_{n+3}=b_n,\ \ \ c_{n+2}=b_{n-1}+b_n,\ \ \ c_{n+1}=b_{n-2}+b_{n-1}+ab_n,
$$
$$
c_2=b_0+ab_1+b_2,\ \ \ c_1=ab_0+b_1,\ \ \ c_0=b_0
$$
and
$$
c_i=b_{i-3}+b_{i-2}+ab_{i-1}+b_i\ (i=3,\dots, n).
$$
Similarly to the case $f(x)=x^2-ax-1$, we get
$$
L(h)\geq |b_n|+|b_{n-1}+b_n|+|b_0|+aL(g)-\sum\limits_{i=0}^{n-2} |b_i|-\sum\limits_{i=0}^{n-1} |b_i|-\sum\limits_{i=1}^n |b_i|\geq
$$
$$
\geq (a-3)L(g)+6.
$$
As $a\geq 3$, $L(f)=a+3$ and $L(g)\geq 2$, this gives $L(h)\geq L(f)$, and the lemma follows.
\end{proof}

\begin{proof}[Proof of Theorem \ref{thm2}]
Let $k\geq 3$. Let $\varepsilon$ be a root of the polynomial $f(x)=x^2-(k-2)x-1$, and set $\mathcal{O}=\mathbb{Z}[\varepsilon]$. By Lemma \ref{lemlo} we know that $\varepsilon$ is a fundamental unit of $\mathcal{O}$. Then, in the same way as in the proof of Lemma \ref{newthm2}, we see that all vanishing sums of units in $K=\Q(\varepsilon)$ are obtained from the integer polynomial multiples $h(x)$ of $f(x)$. Now by Lemma \ref{lemlfg} we get that for all such $h(x)$, $L(h)\geq L(f)=k$ holds. This implies the statement.
\end{proof}

To prove Theorem \ref{thm3}, we need a result concerning cubic factors of certain special trinomials. For theorems on the reducibility of general trinomials, see e.g. \cite{s1} and the corresponding chapter of \cite{selecta}.

\begin{lemma}
\label{lembr}
Let $m,A,E$ be integers with $m\geq 2$ and $E\in\{-1,1\}$. Suppose that $x^{3m}+Ax^m+E$ has an irreducible cubic factor in $\Z[x]$. Then one of the following cases occurs:
\begin{itemize}
\item[(i)] $m=11$, $A=67$ and $E=1$, when $x^3+x+1$ is the only cubic factor,
\item[(ii)] $m=4$, $A=1040$ and $E=-1$,
\item[(iii)] $m=2$, $E=-1$ and $A$ is of the form $A=4t^4-4t$ $(t\in\Z\setminus\{0,1\})$.
\end{itemize}
\end{lemma}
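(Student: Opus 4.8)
The plan is to study factorizations of the special trinomial $T(x)=x^{3m}+Ax^m+E$ by substituting $x^m$ and tracking how a putative irreducible cubic factor must interact with this structure. First I would set $y=x^m$ and write $T(x)=U(x^m)$ with $U(y)=y^3+Ay+E$; so every factor of $T$ pulls back to a factor divisible by the image of the $m$-th power map. If $c(x)=x^3+px^2+qx+r$ (with $r=\pm1$ since $T$ is monic with constant term $\pm1$ and $c\mid T$) is an irreducible cubic factor over $\Z$, then its roots are of the form $\zeta\alpha$, $\zeta'\alpha'$, $\zeta''\alpha''$ where the $\alpha$'s are roots of $U$ and the $\zeta$'s are $m$-th roots of unity; the resultant/Galois bookkeeping forces strong constraints. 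The cleanest route is: a root $\theta$ of $c$ satisfies $\theta^{3m}+A\theta^m+E=0$, hence $\theta^m$ is an algebraic number of degree dividing $3$ over $\Q$ lying in $\Q(\theta)$, and in fact $\theta^m$ is a root of $y^3+Ay+E$. Since $[\Q(\theta):\Q]=3$ and $\Q(\theta^m)\subseteq\Q(\theta)$, either $\theta^m\in\Q$ or $\Q(\theta^m)=\Q(\theta)$.

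In the first case $\theta^m=\rho\in\Q$; since $\theta$ is an algebraic integer and $\rho^3+A\rho+E=0$ with $E=\pm1$, we get $\rho\mid E$, so $\rho=\pm1$, forcing $\theta$ to be an $m$-th root of $\pm1$, contradicting that $c$ is an irreducible cubic (its roots would then be roots of unity, so $c\mid x^{6m}-1$, impossible for a cubic that is not cyclotomic of the appropriate degree; one checks $x^3+x+1$, etc., are not cyclotomic, but actually the case $c$ cyclotomic of degree $3$ is $\Phi_7,\Phi_9$ which do not divide $T$ for the relevant shape — this sub-case needs a short direct elimination). So the substantive case is $\Q(\theta^m)=\Q(\theta)$, i.e. $\theta$ itself generates the same cubic field as a root $\alpha$ of $U(y)=y^3+Ay+E$, and $\theta^m=\alpha$ up to the Galois action. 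Then $\theta$ is an $m$-th root of a root of $U$ lying in a cubic field, so $\theta = \alpha^{1/m}$ lives in that cubic field, meaning the cubic field $\Q(\alpha)$ contains an $m$-th root of $\alpha$. Writing the minimal polynomial of $\theta$ and comparing with the condition that $\theta^m$ has minimal polynomial $y^3+Ay+E$ gives polynomial identities in $p,q,A$ (with $r=\pm1$) that one can solve. Concretely, $N(\theta)=\pm r$ and $N(\theta)^m = N(\theta^m) = \pm E$, which already pins down signs; then expanding $c(x)\mid x^{3m}+Ax^m+E$ via the recursion for power sums (Newton's identities applied to $\theta+\theta'+\theta''=-p$, etc.) yields, for each small residue of $3m$, a finite system.

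I expect the main obstacle to be the case analysis for small $m$: the identity $\theta^m=\alpha$ with $\theta,\alpha$ in the same cubic field is a genuinely Diophantine condition, and the three listed exceptional families $(m,A,E)$ are precisely the sporadic and one-parameter solutions that survive. The most delicate part is case (iii), $m=2$: here $\theta^2=\alpha$ in a cubic field with $\alpha^3=-A\alpha+1$ (taking $E=-1$), so $\theta$ has degree $3$ or $6$, and degree $3$ forces $\theta\in\Q(\alpha)$ with $\theta^2=\alpha$; writing $\theta=u+v\alpha+w\alpha^2$ and squaring, then matching coefficients using $\alpha^3=-A\alpha+1$, produces a parametric system whose solution set is exactly $A=4t^4-4t$. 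I would handle this by reducing to: $\theta$ is a root of $x^6+Ax^2-1$ that generates a cubic subfield, equivalently $x^6+Ax^2-1$ has a cubic factor; substituting $x\to$ and using that such a sextic factors as (cubic)(cubic) with the two cubics conjugate under $x\mapsto-x$ up to reordering, one writes $x^6+Ax^2-1=(x^3+bx^2+cx+d)(x^3-bx^2+cx-d)$ and expands: this gives $c^2-2bd=0$ wait — more carefully, $=x^6+(2c-b^2)x^4+(c^2-2bd)x^2-d^2$, so we need $2c=b^2$, $c^2-2bd=A$, $d^2=1$, i.e. $d=\pm1$, $c=b^2/2$, $A=b^4/4\mp 2b$; integrality forces $b=2t$ even, giving $A=4t^4\mp4t$, and after adjusting signs (replacing $x$ by $-x$ or $t$ by $-t$) this is $A=4t^4-4t$ with $t\in\Z\setminus\{0,1\}$ (the excluded $t=0,1$ being where the cubic factors further or $A$ degenerates). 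Cases (i) and (ii) with $m=11$ and $m=4$ come out of the analogous but rigid (non-parametric) systems for those exponents, where one shows $b,c,d$ are forced to isolated integer values; I would invoke the general reducibility results on trinomials cited (\cite{s1}, \cite{selecta}) to bound $m$ and then finish the remaining finitely many $m$ by the explicit factorization bookkeeping just described.
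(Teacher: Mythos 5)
Your proposal sets out to prove directly a statement that the paper itself does not reprove: the paper's proof is simply a citation of the Theorem of Tverberg \cite{tv} (extending Bremner \cite{br}), together with a Magma \cite{magma} check that $x^3+x+1$ is the only cubic factor in case (i). The easy parts of your direct approach are sound: if $\theta$ is a root of the irreducible cubic factor $c(x)=x^3+bx^2+cx+d$ (so $d=\pm1$ and $\theta$ is a unit), then $\theta^m$ is a root of $y^3+Ay+E$, and either $\theta^m\in\Q$ or $\Q(\theta^m)=\Q(\theta)$; the first case dies even more quickly than you indicate, since no cyclotomic polynomial has degree $3$ (note $\Phi_7,\Phi_9$ have degree $6$, not $3$). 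Your $m=2$ computation is also correct: evenness of $x^6+Ax^2+E$ forces $x^6+Ax^2+E=c(x)\bigl(-c(-x)\bigr)$, and expanding gives $E=-d^2=-1$, $b=2t$, $c=2t^2$, $A=4t^4-4t$, exactly case (iii).

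The genuine gap is the entire case $m\geq 3$, which is the substance of the lemma. In your framework, with $d=\pm1$ fixed, the condition that the $\theta_i^m$ have a minimal polynomial of trinomial shape reduces (via Newton's identities) to the single equation $\theta_1^m+\theta_2^m+\theta_3^m=0$, i.e. $p_m(b,c,d)=0$ where $p_m$ is the $m$-th power sum expressed in $b,c,d$. For $m=2$ this is the rational curve $b^2=2c$, whence your one-parameter family; but the lemma asserts that for every $m\geq3$ the only integer solutions are the two sporadic ones $(m,A,E)=(11,67,1)$ and $(4,1040,-1)$, and that no other exponent $m$ occurs at all. Your plan to "invoke the general reducibility results on trinomials \cite{s1,selecta} to bound $m$" is unsupported — no such bound is stated there in a form applicable here — and even with $m$ bounded, each fixed $m\geq3$ still requires determining all integer points of $p_m(b,c,\pm1)=0$ with $c(x)$ irreducible, which is not "factorization bookkeeping": this is precisely the Diophantine content of the Bremner--Tverberg theorems (already the $m=11$, $A=67$ case is a nontrivial piece of \cite{br}). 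You also never address the uniqueness assertion in case (i), which the paper verifies computationally. As written, then, the proposal proves only the parametric family (iii) and leaves the exclusion of all other $(m,A,E)$ — the main claim — unproved; you should either cite Tverberg's theorem as the paper does, or supply an actual argument covering all $m\geq3$.
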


\begin{proof} The statement is an immediate consequence of the Theorem in Tverberg \cite{tv}. Note that this result of Tverberg is an extension of the Theorem in Bremner \cite{br}, where only the case $E=1$ was considered. It is easy to check (e.g. by Magma \cite{magma}) that the only cubic factor of the polynomial $x^{33}+67x^{11}+1$ is $x^3+x+1$.
\end{proof}

Now we can give the

\begin{proof}[Proof of Theorem \ref{thm3}] For given $k$ not of the form $4t^4-4t+2$, take $A=k-2$ and consider the polynomial
$f(x)=x^3+Ax+1$. As one can easily check, $f(x)$ (in view of $A\geq 1$) is irreducible over $\Q$, and has one real and two complex roots. Let $\varepsilon$ be a root of $f(x)$, and put $K=\Q(\varepsilon)$. Write $\varepsilon=\pm \eta^m$ with some $m\geq 2$, where $\eta$ is an appropriately chosen fundamental unit of $K$. Let $h(x)$ be the minimal monic polynomial of $\eta$. It is easy to see that $h(x)$ divides one of the polynomials $x^{3m}+Ax^m\pm 1$ in $\Z[x]$. Noting that as $1040=4(-4)^4-4(-4)$ we have $A\neq 1040$, by Lemma \ref{lembr} we obtain that if $A\neq 67$ then $m=1$ holds.

We conclude that if $A$ is not of the form $4t^4-4t$ $(t\in\Z\setminus\{0,1\})$, then $\varepsilon$ is a fundamental unit of $K$, unless $A=67$. So in the cases where $A\neq 67$, just as before, we get that any vanishing sum of units in $K$ comes from a multiple of $f(x)$. However, by Lemma \ref{lemlfg} we obtain that the number of the terms in any such sum is at least $L(f)=A+2=k$, and the theorem follows in these cases.

Hence we are left with $k-2=A=67$. In this case consider the polynomial $f(x)=x^3+x^2+66x+1$. A simple check by Magma \cite{magma} shows that this polynomial is irreducible, has one real and two complex roots. Further, taking a root $\varepsilon$ of $f(x)$, $\varepsilon$ is a fundamental unit of $K=\Q(\varepsilon)$. By Lemma \ref{lemlfg} we get that for any $g\in\Z[x]$ which is not identically zero, we have $L(fg)\geq L(f)=69$. This in the same way as before shows that $\ell(K)=69$, and the theorem follows.
\end{proof}

Now we give the proof of Proposition \ref{propconj}.

\begin{proof}[Proof of Proposition \ref{propconj}] A simple calculation with Magma \cite{magma} shows that $\alpha_t$ is a fundamental unit of $K_t$ for the values of $t$ under consideration. Hence following the usual argument, the statement follows by Lemma \ref{lemlfg}.
\end{proof}

To prove Theorem \ref{thm4} we need the following lemma, essentially due to Mignotte \cite{mi}. It provides a weaker, but much more general lower bound for $L(fg)$ than Lemma \ref{lemlfg}.

\begin{lemma}
\label{mignotte}
Let $f\in\Z[x]$ of degree $n\geq 0$. Then for any $g\in\Z[x]$ which is not identically zero, we have $L(fg)\geq 2^{-n}L(f)$.
\end{lemma}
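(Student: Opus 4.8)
The plan is to pass from the length $L$ to the Mahler measure, which behaves multiplicatively, and then to exploit the fact that a nonzero integer polynomial has Mahler measure at least $1$. Recall that for a nonzero $p(x)=a_m\prod_{i=1}^m(x-\alpha_i)\in\C[x]$ one sets $M(p)=|a_m|\prod_{i=1}^m\max(1,|\alpha_i|)$, and that the Mahler measure is multiplicative: $M(pq)=M(p)M(q)$.

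First I would record two classical estimates comparing $L$ and $M$. On the one hand, writing $p(z)=\sum a_jz^j$, the bound $M(p)\le\|p\|_2\le\|p\|_1=L(p)$ follows from the integral representation of $M(p)$ together with Jensen's inequality and the Cauchy--Schwarz inequality (Parseval identifies $\|p\|_2$ with the $L^2$-norm of $p$ on the unit circle). On the other hand, since the coefficient $a_{m-k}$ of $p$ equals $\pm a_m e_k(\alpha_1,\dots,\alpha_m)$, where $e_k$ is the $k$-th elementary symmetric polynomial, and $|e_k(\alpha)|\le\binom{m}{k}\prod_{i=1}^m\max(1,|\alpha_i|)$, summing over $k$ yields $L(p)\le\bigl(\sum_{k=0}^m\binom{m}{k}\bigr)M(p)=2^{\deg p}\,M(p)$.

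Next I would apply these to $f$ and $fg$: with $n=\deg f$ one obtains
$$
L(f)\le 2^n M(f)=2^n\frac{M(fg)}{M(g)}\le 2^n M(fg)\le 2^n L(fg).
$$
Here the middle equality is the multiplicativity of $M$; the subsequent inequality uses that $g\in\Z[x]$ is not identically zero, so the absolute value of its leading coefficient — and hence $M(g)$ — is at least $1$; and the last inequality is the first classical estimate applied to $fg$. Dividing by $2^n$ gives $L(fg)\ge 2^{-n}L(f)$, as claimed; the degenerate case $n=0$ (with $f$ a nonzero constant) is already covered by the same chain.

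The argument presents no genuine obstacle: it rests entirely on the multiplicativity of the Mahler measure and on the two elementary inequalities $M(p)\le L(p)\le 2^{\deg p}M(p)$. The only step calling for a little care is the upper bound $L(p)\le 2^{\deg p}M(p)$, i.e. the estimate of the elementary symmetric functions of the roots in terms of $M(p)$; this is precisely where the factor $2^n$ enters, which accounts for the exponential weakening of the conclusion compared with the sharp bounds of Lemma~\ref{lemlfg}.
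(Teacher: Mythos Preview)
Your proof is correct and is essentially the same argument as the paper's: the paper quotes Mignotte's Theorem~2, namely $|a_i|\le\binom{n}{i}\|fg\|_2$, and sums over $i$ to get $L(f)\le 2^n\|fg\|_2\le 2^n L(fg)$, whereas you have simply unpacked the proof of Mignotte's inequality itself (Landau's bound $M(p)\le\|p\|_2$, the elementary-symmetric bound $|a_i|\le\binom{n}{i}M(f)$, multiplicativity of $M$, and $M(g)\ge 1$). The content and the route are the same; your version is more self-contained.
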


\begin{proof} Write
$$
f(x)=a_nx^n+\dots+a_1x+a_0\ \ \ \text{and}\ \ \
f(x)g(x)=b_sx^s+\dots+b_1x+b_0.
$$
Theorem 2 of \cite{mi} gives
$$
|a_i|\leq {n\choose i} \sqrt{\sum\limits_{j=0}^s b_j^2}\ \ \ (i=0,\dots,n).
$$
Thus
$$
L(f)=|a_n|+\dots+|a_0|\leq 2^n \sqrt{\sum\limits_{j=0}^s b_j^2}\leq 2^n L(fg),
$$
and the statement follows.
\end{proof}

The last assertion we need in the proof of Theorem \ref{thm4} concerns lengths of polynomials $g(x)$ such that $L(fg)$ is "small" for a given $f(x)$.

\begin{lemma}
\label{filaseta}
Let $f(x)\in\Z[x]$ having no cyclotomic factors, and let $N\geq 1$. Then there exists an effectively computable constant $C(L(f),N)$ depending only on $L(f)$ and $N$ such that for any $g(x)\in\Z[x]$ with $L(fg)\leq N$ we have $L(g)\leq C(L(f),N)$. Further, at least one such $g$ satisfies ${\rm deg}(g)\leq C(L(f),N)({\rm deg}(f)+1)$.
\end{lemma}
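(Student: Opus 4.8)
\section*{Proof proposal for Lemma \ref{filaseta}}

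The plan is to reduce, via a clustering decomposition of $g$, to the case where $g$ is ``dense'' (consecutive nonzero coefficients at most $\deg f$ apart), and then to exploit a root of $f$ off the unit circle. First I would make the harmless normalisations $f(0)\neq 0$, $g(0)\neq 0$, $\deg f\geq 1$: the length of a polynomial is unchanged by multiplication by a power of $x$, so we may strip off such factors, and if $\deg f=0$ then $L(fg)=|f|\,L(g)$ and the statement is trivial. Since $f$ has no cyclotomic factor and $f(0)\neq 0$, Kronecker's theorem shows $f$ has a root $\alpha$ with $|\alpha|\neq 1$; replacing $f$ by its reciprocal $x^{\deg f}f(1/x)$ if necessary — which changes neither $L(f)$, $L(fg)$, $L(g)$ nor any of the cluster structure below — we may assume $|\alpha|>1$. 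I would also treat the irreducible case of $f$ first: this is what the application to Theorem \ref{thm4} needs, and the reducible case follows by splitting off irreducible factors one at a time and inducting (a quotient of $f$ by one irreducible factor again has no cyclotomic factor, and Lemma \ref{mignotte} controls the lengths that appear in the induction).

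Next, decompose $g=\sum_i x^{a_i}g^{(i)}$ into its maximal clusters: scanning the support of $g$, cut wherever two consecutive exponents differ by more than $n:=\deg f$, so each $g^{(i)}\in\Z[x]$ has $g^{(i)}(0)\neq 0$, all coefficient gaps $\le n$, and the clusters are separated by gaps $>n$. Then the polynomials $x^{a_i}fg^{(i)}$ have pairwise disjoint supports, so $L(fg)=\sum_i L(fg^{(i)})$ and $L(g)=\sum_i L(g^{(i)})$; since $\deg f\geq 1$ each $fg^{(i)}$ is a non-monomial integer polynomial, so $L(fg^{(i)})\geq 2$ and there are at most $N/2$ clusters. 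Thus it suffices to bound $\deg G$ (and hence, by Lemma \ref{mignotte}, $L(G)\le 2^{\deg G}L(fG)$) for a single dense $G$ with $G(0)\neq 0$ and $L(fG)\le N$. For the last assertion of the lemma, one then takes an arbitrary admissible $g$, decomposes it into clusters of bounded degree, and slides the (at most $N/2$) clusters together so that consecutive ones are separated by exactly $n+1$: this leaves $L(f\,\cdot\,)$ unchanged and yields an admissible polynomial of degree $\le C(L(f),N)(\deg f+1)$.

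For the dense polynomial $G$, put $H=fG$, list its support $p_0<\dots<p_r$ with coefficients $h_s$ (so $r+1\le N$, $\sum_s|h_s|\le N$, and $p_0=0$ because $H(0)=f(0)G(0)\neq 0$), and use $H(\alpha)=0$. Peeling from the top: with $T_s=\sum_{s'\ge s}h_{s'}\alpha^{p_{s'}}$ one has $T_s=\alpha^{p_s}U_s$ where $U_s=\sum_{s'\ge s}h_{s'}\alpha^{p_{s'}-p_s}\in\Z[\alpha]$, while $T_s=-\sum_{s'<s}h_{s'}\alpha^{p_{s'}}$ gives $|T_s|\le N|\alpha|^{p_{s-1}}$; hence $|\alpha|^{p_s}|U_s|\le N|\alpha|^{p_{s-1}}$, so $p_s-p_{s-1}\le\log_{|\alpha|}(N/|U_s|)$ whenever $U_s\neq 0$. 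The key observation is that $U_s$ cannot vanish for a dense $G$: if $U_s=0$ then both the top part $\sum_{s'\ge s}h_{s'}x^{p_{s'}}$ and the bottom part $\sum_{s'<s}h_{s'}x^{p_{s'}}$ of $H$ vanish at $\alpha$, hence (as $f$ is the minimal polynomial of $\alpha$) both are divisible by $f$, so $G=H/f$ splits as a sum of two polynomials supported in $[0,p_{s-1}-n]$ and $[p_s,p_r-n]$ respectively, giving $G$ a run of $\ge n$ consecutive zero coefficients — contrary to density. So every gap $p_s-p_{s-1}$ is bounded, whence $\deg G=p_r-n$ is bounded, and reassembling the clusters gives $L(g)\le C(L(f),N)$.

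The step I expect to be the genuine obstacle is making the gap bound — and therefore the final constant — depend only on $L(f)$ and $N$, not on $\deg f$. The lower bounds $|U_s|\ge\delta>0$ used above must be uniform over all $f$ of a given length, and this comes down to a lower bound, in terms of $L(f)$ alone, for how close the modulus of a root of $f$ can be to $1$ (equivalently, for the Mahler measure of a non-cyclotomic polynomial of bounded length). For $f=x^n+x+1$ with $n\not\equiv 2\pmod 3$, for example, $L(f)=3$ but the root with $|\alpha|>1$ tends to the unit circle as $n\to\infty$, so the elementary peeling alone is not enough; one genuinely needs a ``sparse Lehmer''-type estimate here, and it is at this point that a result in the spirit of Filaseta on factors of polynomials with few terms / bounded length has to be brought in to close the argument with a constant that is effectively computable in terms of $L(f)$ and $N$ only.
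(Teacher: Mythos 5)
Your plan does not close the main part of the lemma, and the gap is exactly where you say it is. The whole content of the first assertion is that the bound on $L(g)$ depends only on $L(f)$ and $N$, uniformly in $\deg f$. Your peeling argument bounds each gap $p_s-p_{s-1}$ by $\log_{|\alpha|}\bigl(N/|U_s|\bigr)$, and this cannot be made uniform: $|\alpha|$ can tend to $1$ for fixed length (your own example $x^n+x+1$), and in addition a nonzero $U_s\in\Z[\alpha]$ is not bounded away from $0$ for fixed $f$ either -- the standard norm argument lower-bounds $|U_s|$ only in terms of the degree and Mahler measure of $f$ raised to the exponent range, which reintroduces $\deg f$ (indeed circularly, since the exponents are what you are trying to bound). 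The ``sparse Lehmer'' input you invoke at the end to repair this is precisely the statement being proved: the paper's own proof of the first assertion is simply a citation of Theorem 1 of Filaseta and Solan \cite{fs} (with the remarks that their norm can be replaced by the length and that their constant can be made effective), so your argument, once it appeals to ``a result in the spirit of Filaseta,'' has not produced an independent proof but has reduced the lemma to itself.

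Two further leaks of the same uniformity, worth noting: Kronecker's theorem applies only to monic polynomials, while the lemma allows non-monic $f$ -- a polynomial such as $2x^2+3x+2$ has no cyclotomic factor yet all its roots lie on the unit circle, so the very first step ``choose a root with $|\alpha|>1$'' can fail; and your reduction from reducible to irreducible $f$ controls the lengths of the irreducible factors via Lemma \ref{mignotte} only up to a factor $2^{\deg f}$, again a dependence on the degree rather than on $L(f)$. By contrast, your second assertion (the degree bound) is fine and is essentially the paper's argument: if $g$ has $\deg f+1$ consecutive zero coefficients, the low and high parts of $fg$ have disjoint supports, so the gap can be compressed without changing $L(fg)$, and iterating yields a multiplier of degree at most $C(L(f),N)(\deg f+1)$.
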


\begin{proof} The first part of the statement is an immediate consequence of Theorem 1 in \cite{fs}. Note that in \cite{fs} in place of the length the authors work with another norm, however, it is easy to reformulate their result for $L(g)$. Further, $C(L(f),N)$ is not claimed to be effective in \cite{fs}, but following the argument there, one can easily see that this constant is effectively computable, indeed. (See also Theorem 3 in \cite{fs}, where in a special case a $C(L(f),N)$ is explicitly given.)

To prove the second statement concerning the degree of $g$, observe the following. Writing $n={\rm deg}(f)$, $m={\rm deg}(g)$ and $g(x)=\sum\limits_{i=0}^m b_ix^i$, if $n+1$ consecutive coefficients of $g$, say $b_i,\dots,b_{i+n}$ are all zero, then clearly $L(fg)=L(fg^*)$ with
$g^*(x)=\sum\limits_{j=0}^{n+i-1}b_jx^j+\sum\limits_{j=n+i}^{m-1}b_{j+1}x^j$.
This shows that if $L(fg)\leq N$ with $L(g)\leq C(L(f),N)$, then starting from $g$, we can construct a $g_0(x)\in\Z[x]$ such that $L(fg_0)\leq N$, $L(g_0)\leq C(L(f),N)$ and there are at most $n$ consecutive zeros among the coefficients of $g_0$. Hence the statement follows.
\end{proof}

\begin{proof}[Proof of Theorem \ref{thm4}] Let $k\geq 3$ be fixed, and suppose that $K$ is an algebraic number field as in the statement, with $\ell(K)\leq k$. Let $\varepsilon$ be a fundamental unit of $K$, and write $f(x)=x^n+a_{n-1}x^{n-1}\dots+a_1x+a_0\in\Z[x]$ for its minimal monic polynomial. Note that here $n\in\{2,3,4\}$ and $a_0\in\{-1,1\}$. Further, since by our assumption $K$ has no real quadratic subfields and $\Q(\varepsilon)$ cannot be an imaginary quadratic field, we also have $K=\Q(\varepsilon)$.

Let $\varepsilon_1,\dots,\varepsilon_k$ be units in $K$ with
\begin{equation}
\label{ujegy}
\varepsilon_1+\dots+\varepsilon_k=0.
\end{equation}
Assume first that $K$ does not contain roots of unity different from $\pm 1$. By the usual argument, since by our assumption $K$ does not contain any roots of unity different from $\pm 1$, this gives $h(\varepsilon)=0$ with $h\in\Z[x]$ such that $L(h)=k$. Hence for some $g\in\Z[x]$ not identically zero, we have $L(fg)=k$. This by Lemma \ref{mignotte} yields $L(f)\leq 16k$. So as $K=\Q(\varepsilon)$, there are only finitely many such $K$. Checking all the possibilities with $L(f)\leq 16k$, in view of Lemma \ref{filaseta} these number fields can be effectively determined.

Suppose next that $K$ contains some root of unity different from $\pm 1$. Then $K$ contains a primitive $m$-th root of unity $\eta$ with some $m\geq 3$. As we have $\varphi(m)\leq 4$, we get that
$$
m\in\{3,4,5,6,8,10,12\}.
$$
If $m$ is one of $3,6,12$, then $\zeta_3\in K$, which is excluded. If $m$ is $5$ or $10$, then $K$ is defined by the polynomial $x^4+x^3+x^2+x+1$. However, then (as one can readily check e.g. by Magma \cite{magma}) $K$ has $\Q(\sqrt{5})$ as a subfield, which is excluded again. If $m=8$, then $K$ is defined by $x^4+1$, and using again Magma, we see that $\Q(\sqrt{2})$ is a subfield of $K$, which is also excluded. So we are left with the only possibility $m=4$, and the roots of unity of $K$ are precisely $\pm 1,\pm i$.

In this case, one can do the following.\footnote{Note that this paragraph is different (much shorter) in the published version of the paper. We find that it is worth to give more explanation at this point.} First note that every polynomial $R(x)\in {\mathbb Q}(i)[x]$ can be written as $R(x)=P(x)+iQ(x)$ with $P,Q\in {\mathbb Q}[x]$. With this notation, put $\bar{R}(x)=P(x)-iQ(x)$ and $L^*(R)=L(P)+L(Q)$.

Let $\varepsilon$ be a fundamental unit of $K$. Then $K={\mathbb Q}(\varepsilon)$, and $K$ is a quadratic extension of ${\mathbb Q}(i)$. As $\varepsilon\notin {\mathbb Q}(i)$, $\varepsilon$ is a quadratic element over ${\mathbb Q}(i)$. Let $f^*(x)$ be the minimal polynomial of $\varepsilon$ over ${\mathbb Q}(i)$. Then the minimal polynomial $f(x)$ of $\varepsilon$ over $\mathbb Q$ is $f(x)=f^*(x)\bar{f^*}(x)$.
From \eqref{ujegy} we infer that
\begin{equation}
\label{eqq1}
P(\varepsilon)+iQ(\varepsilon)=0
\end{equation}
with some $P,Q\in {\mathbb Z}[x]$ and $L^*(P+iQ)=k$. Note that thus we have $L(P)+L(Q)=k$. Further, equation \eqref{eqq1} implies that
\begin{equation}
\label{eqq2}
P(x)+iQ(x)=f^*(x)g^*(x)
\end{equation}
holds with some $g^*(x)\in {\mathbb Z}[i][x]$. Letting $g(x)=g^*(x)\bar{g^*}(x)$ (which is in ${\mathbb Z}[x]$) this implies
$$
P(x)^2+Q(x)^2=f(x)g(x).
$$
As by the well-known and trivial inequalities $L(P^2)\leq L(P)^2$ and $L(Q^2)\leq L(Q)^2$ we have
$$
L(P^2+Q^2)\leq k^2,
$$
using Lemma \ref{mignotte} we get that $L(f)\leq 16k^2$.

Thus Lemma \ref{filaseta} implies that $L(g)<C_1(k)$, where $C_1(k),C_2(k),C_3(k)$ denote explicitly computable constants depending only on $k$.

The following observation will be of great help: for any $u,v,w\in {\mathbb Z}[i][x]$ and $A>0$ we have
$$
L^*(w(x)\cdot(u(x)+x^{A+\deg w+\deg u}v(x)))=L^*(w(x)\cdot(u(x)+x^{1+\deg w+\deg u}v(x))).
$$
Thus for any $g^*(x)$ with $L^*(f^*g^*)\leq k$ there exists a $g^*_0(x)\in{\mathbb Z}[i][x]$ for which $L^*(f^*g^*_0)\leq k$, with $L^*(g^*_0)=L^*(g^*)$ and $\deg g^*_0 < C_2(k)$. This follows by noting that the number of non-zero coefficients of $g^*$ is bounded by $L^*(g^*)$, and further, by the above observation, (inductively) all the 'large gaps' among consecutive non-zero coefficients of $g^*(x)$ (in view of $\deg f^*=2$) can be 'shortened' below an effectively computable bound. So we can restrict our attention to polynomials $g^*(x)$ with degree bounded in terms of $k$; in what follows, we assume that $\deg g^* < C_2(k)$.

The upper bounds established for $L(f)$ and $L(g)$ yield
$$
\max\{L^*(f^*),L^*(g^*)\}<C_3(k).
$$
This follows from the fact that for any $h(x)\in {\mathbb Z}[i][x]$, $L^*(h)$ can be explicitly bounded from above in terms of $L(h\bar{h})$ and $\deg h$ (see Theorem 2 of Mignotte \cite{mi}).

As clearly all such $f^*(x)$ and $g^*(x)$ can be explicitly listed, we can effectively check the finitely many candidate number fields ${\mathbb Q}(\varepsilon)$ defined by $f(x)$, whether \eqref{eqq1} may hold for them (where $P(x)+iQ(x)$ is defined by \eqref{eqq2}) or not.
\end{proof}

\begin{proof}[Proof of Theorem \ref{thm5}] For an integer $A$ set
$$
f_A(x)=x^d+2A^2x+2.
$$
Then by Eisenstein's theorem $f_A(x)$ is irreducible over $\Q$. Let $\alpha$ be a zero of $f_A(x)$, and put $K_A=\Q(\alpha)$. Observe that $N_{K_A/\Q}(\alpha)=2(-1)^d$. Hence every algebraic integer in $K_A$ is congruent to one of $0,1$ modulo $\alpha$. Consequently, any unit of $K_A$ is congruent to $1$ modulo $\alpha$. This immediately shows that a sum of odd number of units cannot be zero; in other words, $\od(K_A)=\infty$.

It remains to show that there are infinitely many number fields $K_A$ of the above form. If there existed only finitely many number fields of the form $K_A$, then letting $K$ be a number field containing all of them, we would obtain that, for every integer $A$, the polynomial $f_A(x)$ would have a zero in $O_K$, the ring of integers of $K$. However, it is easy to see that the algebraic curve
$$
X^d+2XY^2+2=0
$$
is non-rational. Hence, by Siegel's theorem \cite{siuj}, the set of points $(x,y)\in O_K\times \Z$ on this curve is finite which yields a contradiction.
\end{proof}

\begin{proof}[Proof of Theorem \ref{thm6}] Let $a_1=0$, $a_2=2$ and $a_3,\dots,a_{d-1}$ be fixed integers with $2<a_3<\dots<a_{d-1}$. (When $d=3$, we have only $a_1$ and $a_2$.) For any integer $N>a_{d-1}$, set
$$
f_N(x)=x(x-2)(x-a_3)\dots (x-a_{d-1})(x-N)-1.
$$
Then $f_N(x)$ is irreducible (see Westlund \cite{we} or Fl\"ugel \cite{fl}, or for more general results e.g. Gy\H{o}ry and Rim\'an \cite{gyr} or Gy\H{o}ry, Hajdu and Tijdeman \cite{gyht0} and the references there). Let $\xi_N$ be a zero of $f_N(x)$, and $K_N=\Q(\xi_N)$. Observe that $-\xi_N$ and $\xi_N-2$ are both (non-rational) units of $K$. Hence $(-\xi_N,\xi_N-2,1,1)$ is a solution of the unit equation
\begin{equation}
\label{neweqgyk}
\varepsilon_1+\varepsilon_2+\varepsilon_3+\varepsilon_4=0
\end{equation}
in units $\varepsilon_1,\varepsilon_2,\varepsilon_3,\varepsilon_4$ of $K_N$ such that $\varepsilon_3=\varepsilon_4=1$, $\varepsilon_1,\varepsilon_2$ are not rational, and the left hand side of \eqref{neweqgyk} has no vanishing subsum. This proves that $\ev(K)=4$.

It remains to show that there are infinitely many distinct number fields $K_N$ of the above type. Suppose, on the contrary, that there exist only finitely many distinct number fields $K_N$ with the above properties. Then there are infinitely many number fields $K_{N'}=\Q(\xi_{N'})$ which coincide with $K_N=\Q(\xi_N)$ for a fixed $\xi_N$. Here $\xi_{N'}$ denotes a zero of $f_{N'}(x)$.

The tuple $(-\xi_{N'},\xi_{N'}-2,1,1)$ is also a solution of \eqref{neweqgyk} for every $N'$ under consideration. But the tuples $(-\xi_N,\xi_N-2,1,1)$ and $(-\xi_{N'},\xi_{N'}-2,1,1)$ coincide only if $\xi_{N'}=\xi_N$, when $N'=N$. Consequently, equation \eqref{neweqgyk} has infinitely many distinct solutions $(-\xi_{N'},\xi_{N'}-2,1,1)$ in $K_N$, which contradicts the finiteness results of Evertse \cite{ev} and van der Poorten and Schlickewei \cite{ps} on unit equations.
\end{proof}

\noindent
{\bf Remark.} In the above proofs of Theorem \ref{thm5} and \ref{thm6} we could also use Hilbert's Irreducibility Theorem (see e.g. \cite{schuj} Theorem 46) to prove the irreducibility of $f_A(x)$ and $f_N(x)$. Further, the argument used in the second part of the proof of Theorem \ref{thm5} could also be applied at the end of the proof of Theorem \ref{thm6} as well.

\section{An application to arithmetic graphs - representing cycles}

Let $K$ be an algebraic number field, and let $A=\{\alpha_1,\dots,\alpha_m\}$ be a finite ordered subset of $O_K$, the ring of integers of $K$. Denote by ${\mathcal G}(A)$ the graph with vertex set $A$ whose edges are the pairs $[\alpha_i,\alpha_j]$ with
$$
\alpha_i-\alpha_j\in O_K^*,
$$
where $O_K^*$ denotes the unit group of $O_K$. The ordered subsets of the form $A=\{\alpha_1,\dots,\alpha_m\}$ and $A'=\{\alpha_1',\dots,\alpha_m'\}$ of $O_K$ are called equivalent if $\alpha_i'=\varepsilon\alpha_i+\beta$ $(i=1,\dots,m)$ with some $\varepsilon\in O_K^*$ and $\beta\in O_K$. Clearly, in this case the graphs ${\mathcal G}(A)$ and ${\mathcal G}(A')$ are isomorphic. The concept of ${\mathcal G}(A)$ was introduced in Gy\H{o}ry \cite{gyuj1,gyuj2}. For given $m\geq 3$, there are infinitely many equivalence classes of ordered subsets $A$ with $|A|=m$. Apart from finitely many equivalence classes, the structure of these arithmetic graphs have been described by Gy\H{o}ry; see, say \cite{gy2}. These graphs have many important applications to various and wide classes of Diophantine problems; see e.g. Gy\H{o}ry \cite{gy2}, Evertse and Gy\H{o}ry \cite{egy} and the references given there.

Gy\H{o}ry, Hajdu and Tijdeman \cite{gyht1,gyht2} performed a systematic study of of the representability of arithmetic graphs over $\Q$ in the $S$-unit case, and over algebraic number fields, respectively. Among other things, they have generalized some results of Ruzsa \cite{r}.

Ruzsa \cite{r} described the cycles\footnote{$A=\{\alpha_1,\dots,\alpha_m\}$ forms a cycle if $\alpha_i$ and $\alpha_j$ are connected with an edge if and only if either $\{i,j\}=\{1,m\}$ or $|i-j|=1$.} which are representable by arithmetic graphs over $\Q$, using $S$-units. In this case the set of vertices $A$ is a subset of $\Z$, and $[a_i,a_j]$ with $a_i,a_j\in A$ is an edge if and only if all the prime divisors of $a_i-a_j$ belong to a fixed finite set of primes $S=\{p_1,\dots,p_s\}$. Ruzsa \cite{r} gave a complete characterization of cycles in this case, by proving that if $2\in S$ then ${\mathcal G}(A)$ contains cycles of every length $\geq 3$, while if $2\nmid S$ then ${\mathcal G}(A)$ contains cycles of every {\it odd} length $\geq 3$, but none of even length. (See also \cite{chkt} for certain related problems and results.)

Now connecting to the above mentioned result of Ruzsa \cite{r}, we completely characterize the possible lengths of cycles among the graphs ${\mathcal G}(A)$ over number fields $K$, where $A$ is a finite subset of the ring of integers of $K$.

\begin{theorem}
\label{thmcyc}
Let $K$ be an algebraic number field different from $\Q$ and the imaginary quadratic fields. Then among the graphs ${\mathcal G}(A)$
\begin{itemize}
\item[i)] there are cycles of every even length $\geq 4$,
\item[ii)] there are cycles of every odd length $\geq \od(K)$, but there are no cycles of odd length $<\od(K)$.
\end{itemize}
\end{theorem}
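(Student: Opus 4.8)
The plan is to reduce both parts of the statement to the existence of suitable vanishing unit sums in $K$, which we control via the quantities $\ell(K)$ and $\od(K)$, and then to build the vertex sets of the desired cycles explicitly from the units occurring in those sums. First I would record the trivial direction of ii): if $\{\alpha_1,\dots,\alpha_m\}$ is a cycle of odd length $m$, then $\sum_{i=1}^{m}(\alpha_{i+1}-\alpha_i)=0$ (indices mod $m$), and each difference along the cycle is a unit, so \eqref{defprop} holds with $m$ terms; since $m$ is odd this forces $m\geq\od(K)$. This already gives the ``no cycles of odd length $<\od(K)$'' half of ii). Note that by Theorem \ref{thm1} and the hypothesis on $K$, $\ell(K)$ is finite, so $\ev(K)<\infty$ by Theorem \ref{newcor1}; this finiteness is what makes the constructions below possible.

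For the constructive direction I would argue as follows. Suppose $\varepsilon_1+\dots+\varepsilon_k=0$ is a vanishing sum of units of $K$ with no proper vanishing subsum, and set $\alpha_0=0$, $\alpha_j=\varepsilon_1+\dots+\varepsilon_j$ for $j=1,\dots,k$; then $\alpha_k=0=\alpha_0$, and consecutive differences $\alpha_j-\alpha_{j-1}=\varepsilon_j$ are units, so $\{\alpha_0,\dots,\alpha_{k-1}\}$ carries a closed walk of length $k$ in ${\mathcal G}(A)$. The point of the ``no proper vanishing subsum'' condition is precisely that all partial sums $\alpha_j$ are distinct, so the walk is a genuine cycle of length $k$, provided also that no non-consecutive $\alpha_i,\alpha_j$ differ by a unit --- a priori ${\mathcal G}(A)$ could have extra chords. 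To handle the chords one should be slightly more careful: either choose the $\varepsilon_i$ so that the partial sums are well-separated (e.g. multiply a given minimal vanishing relation by a high power of a fundamental unit, or embed into a larger number field), or, more cleanly, invoke that we are free to choose $A$ and we only need the existence of \emph{some} realising set; taking the partial sums of an appropriate relation and then checking chords by a direct growth/size argument suffices. I expect this chord-freeness bookkeeping to be the main technical nuisance, though it is routine.

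It then remains to produce vanishing unit sums of all the required lengths. For i): by Theorem \ref{newcor1}, $K$ contains an even unit $\varepsilon\neq\pm1$, and if $f$ is its minimal monic polynomial then $f(\varepsilon)=0$ gives a vanishing sum with $L(f)=:2t$ terms and no proper vanishing subsum, hence a cycle of even length $2t$; to get \emph{every} even length $\geq4$, combine this with the trivial relations $1+(-1)=0$ --- concretely, $\varepsilon_1+\dots+\varepsilon_k=0$ with no proper vanishing subsum yields, for any $s\geq0$, the relation $\varepsilon_1+\dots+\varepsilon_k+(1-1)+\dots+(1-1)=0$, and by splicing in $+1,-1$ pairs adjacent to existing $\pm1$ terms (or by a short ad hoc construction starting from $1+1+1+(-1)+(-1)+(-1)=0$ and the length-$2t$ relation) one realises all even $k\geq4$; the $d\geq2$ hypothesis via Theorem \ref{newcor1} guarantees the non-trivial even relation exists. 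For ii): if $\od(K)=\infty$ there is nothing to prove beyond the already-established non-existence; if $\od(K)=:k_0<\infty$, fix a vanishing sum of $k_0$ units with no proper vanishing subsum (which exists by definition of $\od(K)$), giving a cycle of length $k_0$, and then for any odd $k>k_0$ adjoin $(k-k_0)/2$ pairs $+1,-1$ as above, taking care that the resulting sum still has no proper vanishing subsum (arrange the appended $\pm1$'s so that each $+1$ is used up together with the rest of the sum, not with its paired $-1$) --- equivalently, use that $k_0\geq3$ is odd and $3+(\text{even})$ exhausts all odd integers $\geq k_0$. Feeding these relations through the partial-sum construction of the previous paragraph yields cycles of every odd length $\geq\od(K)$, completing the proof.
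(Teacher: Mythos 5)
The easy half of ii) and the idea of building the odd cycle from the partial sums of a minimal vanishing relation match the paper, but the two places you wave away are exactly where the real content lies, and the fixes you sketch do not work. For the base cycle of length $\od(K)$, chord-freeness is not a ``growth/size'' matter: multiplying the whole relation by a power of a fundamental unit multiplies every difference $\alpha_j-\alpha_i$ by that unit, so it changes nothing about which differences are units, and embedding into a larger field changes $K$. The argument that is actually needed (and is the paper's key observation) is the minimality of $\od(K)$ itself: if $\alpha_j-\alpha_i=\varepsilon_0\in O_K^*$ for a non-consecutive pair with $(i,j)\neq(1,k)$, then $\varepsilon_{i+1}+\dots+\varepsilon_j-\varepsilon_0=0$ and $\varepsilon_1+\dots+\varepsilon_i+\varepsilon_{j+1}+\dots+\varepsilon_k+\varepsilon_0=0$ are vanishing sums of units of lengths $j-i+1$ and $i+k-j+1$; one of these is odd and both are smaller than $\od(K)$, a contradiction (the same reasoning gives distinctness of the $\alpha_i$). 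Your proposal never supplies this step.

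Your device for producing the remaining lengths, splicing $+1,-1$ pairs into a relation, also breaks down. Any such enlarged sum contains the vanishing subsum $\{1,-1\}$; concretely, the partial sum after the original $k$ terms is $0$, so the starting vertex repeats and you do not get a cycle, while reorderings need unproven side conditions, and even when the partial sums are distinct the minimality argument no longer excludes chords at length $\od(K)+2$ (a chord there can produce an odd relation of length exactly $\od(K)$, which is no contradiction). For even lengths the dictionary you rely on is wrong in an essential way: a cycle does not require a subsum-free relation --- the $4$-cycle $\{0,1,1+\varepsilon,\varepsilon\}$ has edge differences $1,\varepsilon,-1,-\varepsilon$ --- and demanding subsum-free relations of every even length $\geq 4$ would force $\ev(K)=4$, which fails for all but finitely many real quadratic fields by Theorem \ref{thm4}, even though those fields do have $4$-cycles. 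What the paper does instead, and what your proof is missing, is: (a) the explicit $4$-cycle $A=\{0,1,1+\varepsilon,\varepsilon\}$ with $\varepsilon$ a unit such that neither of $1\pm\varepsilon$ is a unit (such $\varepsilon$ exists by the finiteness of solutions of unit equations, or by an elementary congruence argument modulo a prime ideal above $2$); and (b) Ruzsa's length-increasing step: given a cycle $\{\alpha_1,\dots,\alpha_t\}$, choose a unit $\varepsilon$ with $\alpha_i+\varepsilon\neq\alpha_j$ and $\alpha_i+\varepsilon-\alpha_j\notin O_K^*$ for the finitely many relevant pairs, and pass to $\{\alpha_1,\alpha_1+\varepsilon,\alpha_2+\varepsilon,\dots,\alpha_{t-1}+\varepsilon,\alpha_{t-1},\alpha_t\}$, a cycle of length $t+2$. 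With this $+2$ step both i) and ii) follow from the two base cases; without it, or a working substitute, your argument does not go through.
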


\begin{proof} If $\od(K)=\infty$, then there is nothing to prove about odd cycles, so throughout the proof we shall assume that $\od(K)<\infty$. (It will be clear from the proof that this assumption has no effect at all on the statement concerning even cycles.) Let $\varepsilon\in O_K^*$ such that neither of $1\pm\varepsilon$ is a unit. The existence of such units follows from deep finiteness results of Siegel \cite{si} and Lang \cite{lang0} on unit equations. However, it can be seen also in many elementary ways. For example, take a prime ideal ${\mathcal P}$ in $K$ lying above $2$, and let $\eta$
be any unit of infinite order. Then writing $n$ for the order of $\eta$ modulo ${\mathcal P}$, we have $1-\eta^n\in{\mathcal P}$ and by $1+\eta^n=2-(1-\eta^n)$, also $1-\eta^n\in{\mathcal P}$. Hence taking $\varepsilon=\eta^n$, $1\pm \varepsilon$ are not units.

Then as one can readily check, $A=\{0,1,1+\varepsilon,\varepsilon\}$ is a cycle of length $4$. Observe that the existence of an odd cycle of length $<\od(K)$ would contradict the minimality of $\od(K)$. Let now $\varepsilon_1,\dots,\varepsilon_k\in O_K^*$ with $k=\od(K)$
such that $\varepsilon_1+\dots+\varepsilon_k=0$, and let
$$
\alpha_i=\varepsilon_1+\dots+\varepsilon_i\ \ \ (i=1,\dots,k).
$$
We claim that ${\mathcal G}(A)$ with $A=\{\alpha_1,\dots,\alpha_k\}$ is a cycle. For this, first observe that $\alpha_i\neq \alpha_j$ for $1\leq i<j\leq k$. Indeed, otherwise we would have
$$
\varepsilon_{i+1}+\dots+\varepsilon_j=0,
$$
and consequently
$$
\varepsilon_1+\dots+\varepsilon_i+ \varepsilon_{j+1}+\dots+\varepsilon_k=0.
$$
However, as one of $j-i$, $i+k-j$ is odd, this would violate the minimality of $\od(K)$. Then, also observe that $[\alpha_i,\alpha_j]$ with $1\leq i<j\leq k$ is an edge in ${\mathcal G}(A)$ if and only if either $j-i=1$, or $(i,j)=(1,k)$. Indeed, assume to the contrary that $[\alpha_i,\alpha_j]$ is an edge with $1\leq i$, $i+2\leq j$ and $(i,j)\neq (1,k)$. Hence $\alpha_j-\alpha_i=\varepsilon_0\in O_K^*$. Then $\alpha_j-\alpha_i-(\alpha_j-\alpha_i)=0$ implies
$$
\varepsilon_{i+1}+\dots+\varepsilon_j-\varepsilon_0=0,
$$
whence also
$$
\varepsilon_1+\dots+\varepsilon_i+ \varepsilon_{j+1}+\dots+\varepsilon_k+\varepsilon_0=0.
$$
Similarly as above, we see that one of $j-i+1$ and $i+k-j+1$ is odd. Further, $2\leq j-i\leq k-2$ shows that $\max\{j-i+1,i+k-j+1\}<k$. This violates the minimality of $\od(K)$ once again. That is, ${\mathcal G}(A)$ is a cycle (of length $\od(K)$), indeed.

Now we prove that if ${\mathcal G}(A)$ is a cycle of length $t\geq 3$, then there exists a cycle ${\mathcal G}(A')$ of length $t+2$. This clearly finishes the proof. To prove this assertion, we adopt the construction of Ruzsa from the proof of Theorem 3.1 in \cite{r}.

Suppose that
$$
A=\{\alpha_1,\alpha_2,\dots,\alpha_{t-1},\alpha_t\}
$$
is a subset of $O_K$ such that ${\mathcal G}(A)$ is a cycle (of length $t$). Let $\varepsilon\in O_K^*$ be such that $\alpha_i+\varepsilon\neq \alpha_j$ and $\alpha_i+\varepsilon-\alpha_j\notin O_K^*$ ($i=1,\dots,t-1$, $j=1,t-1,t$ with $i\neq j$). By the already mentioned finiteness results of Siegel \cite{si} and Lang \cite{lang0} on unit equations, such an $\varepsilon$ exists. (Note that this assertion could also be proved by simpler tools.) Put
$$
A'=\{\alpha_1,\alpha_1+\varepsilon,\alpha_2+\varepsilon, \dots,\alpha_{t-1}+\varepsilon,\alpha_{t-1},\alpha_t\}.
$$
Now by the choice of $\varepsilon$, using that ${\mathcal G}(A)$ is a cycle of length $t$, we easily see that ${\mathcal G}(A')$ is a cycle of length $t+2$. Hence the proof is complete.
\end{proof}

\noindent
{\bf Remark.} The cases where $K=\Q$ or $K$ is an imaginary quadratic field, can be handled easily. The only cases that need some simple considerations are $K=\Q(i),\Q(\zeta_3)$.

As an immediate consequence of Theorems \ref{thm3} and \ref{thmcyc} we obtain

\begin{cor} For every odd $t\geq 3$ there exists a number field $K$ with the following properties: among the graphs ${\mathcal G}(A)$
\begin{itemize}
\item[i)] there are cycles of every even length $\geq 4$,
\item[ii)] there are cycles of every odd length $\geq t$, but there are no cycles of odd length $<t$.
\end{itemize}
\end{cor}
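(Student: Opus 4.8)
The plan is to reduce the corollary to Theorem \ref{thm3} and Theorem \ref{thmcyc}, the point being that the exceptional set in Theorem \ref{thm3} consists only of even numbers. Given an odd integer $t\geq 3$, I would first observe that $t$ cannot be of the form $4s^4-4s+2$ with $s\in\Z\setminus\{0,1\}$: the quantity $4s^4-4s+2=4s(s^3-1)+2$ is always even, whereas $t$ is odd. Hence Theorem \ref{thm3} applies with $k=t$ and yields a complex cubic number field $K$ with $\ell(K)=t$.

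Next I would pin down $\od(K)$. Recall that $\ell(K)=\min(\od(K),\ev(K))$, that $\ev(K)$ is by definition even (or $\infty$), and that $\od(K)$ is odd (or $\infty$). Since $\ell(K)=t$ is finite and odd, we have $\ev(K)\neq t$, so the minimum must be attained at $\od(K)$, forcing $\od(K)=t$: were $\od(K)<t$ the minimum would be $<t$, and were $\od(K)>t$ then $\min(\od(K),\ev(K))=t$ would force $\ev(K)=t$, which is impossible by parity.

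Finally, $K$ is a complex cubic field, so in particular $K\neq\Q$ and $K$ is not imaginary quadratic; thus Theorem \ref{thmcyc} applies verbatim to $K$. It gives that among the graphs $\mathcal{G}(A)$ there are cycles of every even length $\geq 4$, cycles of every odd length $\geq\od(K)=t$, and no cycles of odd length $<t$, which is precisely the assertion of the corollary. There is essentially no obstacle here: the only points requiring a moment's thought are the parity remark showing that the exceptional values $4s^4-4s+2$ of Theorem \ref{thm3} never equal an odd $t$, and the elementary deduction that $\ell(K)=t$ odd forces $\od(K)=t$.
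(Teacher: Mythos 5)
Your proposal is correct and follows exactly the route the paper intends: the corollary is stated there as an immediate consequence of Theorems \ref{thm3} and \ref{thmcyc}, and your argument simply makes explicit the two small points involved (the excluded values $4s^4-4s+2$ are even, so an odd $t$ is admissible in Theorem \ref{thm3}, and $\ell(K)=t$ odd forces $\od(K)=t$ by parity). Nothing further is needed.
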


\section{Problems and open questions}

In this concluding section we list some problems and open questions, and we also give a remark about a possible continuation of our research.

\vskip.2cm

\noindent{\bf Problems and open questions.}
\begin{itemize}
\item[i)] Prove that for all $k$ of the form $k=4t^4-4t+2$
$(t\in \Z\setminus\{0,1\})$ there exists a number field $K$ with $\ell(K)=k$. (That is, prove the Conjecture after Theorem \ref{thm3}).
\item[ii)] Is it true that for any $d$ with $d\geq 2$ and $a\in \Z_{\geq 4}$ even, $b\in \Z_{\geq 3}\cup\{\infty\}$ odd, there exist infinitely many number fields such that deg$(K)=d$, $\ev(K)=a$ and $\od(K)=b$?
\item[iii)] Can we say something about the distribution of $\ell(K)\pmod{n}$, where $n\geq 3$ is an integer?
\item[iv)] Are there infinitely many totally real quadratic, cubic (both totally real and complex) and totally complex quartic fields $K$, in the latter case assuming that $K$ contains no nontrivial roots of unity, with a system of fundamental units, consisting of even units? (This question is related to Theorem \ref{newcor2} and Proposition \ref{newthm3}.)
\end{itemize}

\vskip.2cm

\noindent{\bf Remark.} In a forthcoming paper, we plan to describe the properties of the set ${\mathcal L}(K)$ of those integers $k\geq 3$ for which the unit equation \eqref{gyeq1} is solvable in units $\varepsilon_1,\dots,\varepsilon_k$ of $K$ such that the left hand side of the above equation has no proper vanishing subsum. If $\ell(K)<\infty$ then $\ell(K)$ is the minimal element of ${\mathcal L}(K)$. It is clear that if $K$ is different from $\Q$ and the imaginary quadratic fields, then the set ${\mathcal L}(K)$ contains arbitrarily large values $k$. Indeed, take an arbitrary unit $\varepsilon$ in $K$, with minimal monic polynomial $f(x)=x^n+a_{n-1}x^{n-1}+\dots+a_1x+a_0$. Observe that $f(\varepsilon)=0$ can be considered as an equation of the form \eqref{gyeq1} with $k=k_\varepsilon=1+|a_{n-1}|+\dots+|a_0|$ terms, clearly with no proper vanishing subsums. Since $k_\varepsilon<C$ can be valid only for finitely many $\varepsilon$ for any constant $C$, but $K$ contains infinitely many units, $|{\mathcal L}(K)|=\infty$ follows. We (at least some of us) intend to study ${\mathcal L}(K)$ further.

\end{document}